\newtheorem{thm}{Theorem}[section]
\newtheorem{lema}[thm]{Lemma}
\newtheorem{cor}[thm]{Corollary}
\newtheorem{prop}[thm]{Proposition} 
\theoremstyle{definition}
\newtheorem{deff}[thm]{Definition}
\newtheorem{notation}[thm]{Notation}
\newtheorem{rmk}[thm]{Remark}
\newtheorem*{thmnonnum}{Theorem}
\newtheorem*{rmknonnum}{Remark}
\newtheorem*{defnonnum}{Definition}
\numberwithin{equation}{section}
\newcommand{\abs}[1]{\lvert #1\rvert}
\newcommand{\N}{\mathbb{N}}
\newcommand{\Z}{\mathbb{Z}}
\newcommand{\fie}{\varphi}
\newcommand{\ra}{\rightarrow}
\newcommand{\bigslant}[2]{{\raisebox{.2em}{$#1$}\left/\raisebox{-.2em}{$#2$}\right.}}
\newcommand*\quot[2]{{^{\textstyle #1}\big/_{\textstyle #2}}} 
\newcommand{\restr}[2]{#1_{\upharpoonright #2}}
\newcommand{\inv}[1]{#1\raisebox{1.15ex}{$\scriptscriptstyle-\!1$}}
\newcommand{\fct}[3]{\begin{array}[t]{lrcl}
#1\colon #2 \rightarrow #3\end{array}}
\newcommand{\accfonction}[5]{#1\colon \begin{cases}
&#2 \longrightarrow #3\\
&#4 \longmapsto #5
\end{cases}}
\DeclareMathOperator{\homeo}{Homeo}
\DeclareMathOperator{\homeomin}{Homeomin}
\DeclareMathOperator{\aut}{Aut}
\DeclareMathOperator{\supp}{supp}
\begin{document}

%%%%% To ease editing, for IMPAN journals add:

\baselineskip=17pt

%%%%%%%%%%%%%%%%

\title{Strong orbit equivalence in Cantor dynamics and simple locally finite groups}

\author{Simon ROBERT\\
Institut Camile Jordan\\
Université Claude Bernard - Lyon1\\
43 Boulevard du 11 novembre 1928, 69622 Villeurbanne Cedex, France\\
E-mail : srobert@math.univ-lyon1.fr}

\date{}

\maketitle

%% Classification and key words; note that the 2010 classification is used:

\renewcommand{\thefootnote}{}

\footnote{2020 \emph{Mathematics Subject Classification}: Primary 37B02; Secondary 03E15}

\footnote{\emph{Key words and phrases}: Borel-reducibility, Cantor Dynamics, Minimal homeomorphism, Strong Orbit Equivalence, Kakutani-Rocklin partitions, Topological full group, isomorphism relation on the space of countable, locally finite simple groups.}

\renewcommand{\thefootnote}{\arabic{footnote}}
\setcounter{footnote}{0}

%%%%%%%%

\begin{abstract}
We study certain countable locally finite groups attached to minimal homeomorphisms, and prove that the isomorphism relation on simple, countable, locally finite groups is a universal relation arising from a Borel $S_\infty$-action. This work also provides a dynamical approach to a result of Giordano, Putnam and Skau characterizing strong orbit equivalence.
\end{abstract}

\section{Introduction}

Algebraic tools (notably, dimension groups) have long been known as a useful and effective way to study minimal homeomophisms of the Cantor space; 
sometimes this leads to relatively complex, high-powered proofs of theorems that one would like to understand dynamically, so as
to gain a different perspective or extend them to other contexts. Such is one of the purposes of this article.
Using a different perspective can lead to new results: even though the core of this article is about topological dynamics, our main result is about Borel reducibility theory. We only briefly explain the general context here and refer the interested reader to {\cite{BecKec}}, section 3, {\cite{Gao}}, section 5 and references therein for more advanced results.

\begin{defnonnum}
Let $E,F$ be equivalence relations on standard Borel spaces. We say that  \emph{$E$ is Borel reducible to $F$}, and write $E\leq F$, if there exists a Borel map $f$ such that 
\[\forall x,x' \ xEx'\Longleftrightarrow f(x)Ff(x').\]
We call such a map $f$ a \emph{Borel reduction from $E$ to $F$}. If $E\leq F$ and $F\leq E$, we say that \emph{$E$ and $F$ are Borel bireducible}.
\end{defnonnum}

The idea behind this definition is that solving the classification problem associated to $E$ (i.e deciding when two elements are in the same $E$-class) is then simpler than solving the one associated to $F$, hence the relation $F$ can be considered "more complex" than $E$.
However, without a requirement on $f$, this notion would only detect the number of equivalence classes, and such a map could be very chaotic, this is why one wants to ensure that the correspondence is somehow computable, and in this paper, the notion of computable retained is Borel.

An important source of equivalence relations is given by actions of Polish groups. The following theorem shows that given a Polish group, there always exists an equivalence relation arising from it that is as complex as possible :

\begin{thmnonnum}[{\cite{BecKec}, Corollary 3.5.2}]
Let $G$ be a Polish group. There exists an equivalence relation $E_G$ arising from a Borel $G$-action on a standard Borel space such that any other such relation Borel reduces to it.
\end{thmnonnum}
We will call such a relation \emph{$G$-universal}. We will be interested in equivalence relations arising from actions of $S_\infty$, the permutation group of a countably infinite set, which is a well known Polish group (see {\cite{Gao}, section 2.4}), and we will denote $E_\infty$ a (unique up to Borel bireducibility) universal relation arising from an action of it. Our main theorem is the following:
\begin{thmnonnum}
The relation of isomorphism of countable, locally finite, simple groups is a universal relation arising from a Borel action of $S_\infty$ (i.e is Borel bireducible to $E_\infty$).
\end{thmnonnum}
A proof of this result is given in section \ref{section: Borel complexity}.  We use a result of {\cite{Mel20}}, namely that strong orbit equivalence (a notion from topological dynamics that we will define straight after this) is Borel bireducible to $E_\infty$, and we find a Borel way to associate to minimal homeomorphisms some locally finite simple groups that are isomorphic exactly when the homeomorphisms are strong orbit equivalent.

In order to describe more precisely the groups at stake, let us move on to some notions of dynamics. 

Any action of a group $G$ on a set $X$ induces an equivalence relation, whose equivalence classes are the $G$-orbits.
Forgetting the action to focus only on this equivalence relation, two actions can be very different
dynamically speaking, even coming from different groups, yet generate the same equivalence relation up to a bijection. 
This leads to the notion of orbit equivalence:
\begin{defnonnum}
Let $G,G'$ be groups and $X,X'$ be sets. Two actions $\alpha\colon G\curvearrowright X$ and $\beta\colon G'\curvearrowright \nolinebreak X'$ are \emph{orbit equivalent} if there exists a bijection \fct{h}{X}{X'} (called \emph{an orbit equivalence between $\alpha$ and $\beta$}) that realizes a bijective correspondence between $\alpha$-orbits and $\beta$-orbits, i.e $$\forall x\in X \ h(Orb_\alpha(x))=Orb_\beta(h(x))$$
\end{defnonnum}
\noindent If the sets $X$ and $X'$ are endowed with a structure, it is natural to require $h$ to preserve this structure, i.e. to be an 
isomorphism from $X$ to $X'$, and this will be a part of our conventions below. It is then tempting to try and classify actions up to orbit equivalence. In a measure theoretical context, this is now a well-understood problem; 
A combination of works by Dye ({\cite{Dye}}) and later Ornstein--Weiss ({\cite{OW}}) gives the following famous result: there is only one probability measure preserving, ergodic action of an amenable group on a standard probability space up to orbit equivalence.

However, in a topological context, when $X$ is a Cantor space, the situation is more complicated. In {\cite{GPS1}} and {\cite{GPS2}}, Giordano, Putnam and Skau managed, 
using $C^*$-algebra techniques, to obtain 
results about $\Z$-actions. Before quoting them, let us recall some basic objects from topological dynamics:
\begin{defnonnum}
A homeomorphism $\fie$ on the Cantor space $X$ is said to be \textit{minimal} if every $\fie$-orbit is dense in $X$: $\forall x\in X \ \overline{Orb_\fie(x)}=X$.
\end{defnonnum}
\begin{defnonnum}
The \textit{full group of $\fie$}, denoted by $[\fie]$, consists of all homeomorphisms that preserve $\fie$-orbits:
\[[\fie]=\left\{f\in\homeo(X) \colon \forall x\in X \  \exists n_f(x)\in\Z \text{ such that } f(x)=\fie^{n_f(x)}(x)\right\} \]
For $f\in [\fie]$, the application $\accfonction{n_f}{X}{\Z}{x}{n_f(x)}$ is called \textit{a cocycle of $f$}.
\end{defnonnum}
\begin{rmknonnum}
Since $\fie$ is aperiodic (because it is minimal), cocycles are uniquely defined.
\end{rmknonnum}
\begin{defnonnum}
The \textit{topological full group of $\fie$}, denoted by $\llbracket\fie\rrbracket$, is the subgroup of elements in $[\fie]$ which have a continuous cocycle.
\end{defnonnum}
Using this vocabulary, Giordano, Putnam and Skau's Theorem about orbit equivalence is the following:
 
\begin{thmnonnum}[{\cite{GPS2}, Corollary 4.6}]
Let ($\fie_1,X_1)$ and $(\fie_2,X_2)$ be minimal Cantor systems. Then the following are equivalent:
\begin{enumerate}[(i)]
    \item The two systems ($\fie_1,X_1)$ and $(\fie_2,X_2)$ are orbit equivalent
    \item The full groups $[\fie_1]$ and $[\fie_2]$ are isomorphic as abstract groups
    \item \label{GPS2-iii} There exists a homeomorphism $\fct{g}{X_1}{X_2}$ that pushes forward $\fie_1$-invariant measures onto $\fie_2$-invariant measures, 
    i.e $g_*M(\fie_1)=M(\fie_2)$, where 
    $$M(\fie_i)=\{ \mu \text{ probability measure on } X_i \colon {\fie_i}_*\mu=\mu \}$$
\end{enumerate}   
\end{thmnonnum}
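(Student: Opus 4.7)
The implications (i) $\Rightarrow$ (ii) and (i) $\Rightarrow$ (iii) are essentially immediate. The first follows from conjugation: an orbit equivalence $h\colon X_1\to X_2$ induces an isomorphism $[\fie_1]\to[\fie_2]$ via $f\mapsto hfh^{-1}$. For the second, note that $h\fie_1 h^{-1}$ lies in $[\fie_2]$ with a Borel-measurable cocycle (since $\{n_f=n\}=\{f=\fie_2^n\}$ is closed), and by decomposing along the level sets of the cocycle one checks that any element of $[\fie_2]$ preserves every $\fie_2$-invariant probability measure; hence $h_*M(\fie_1)=M(\fie_2)$. The genuine content of the theorem therefore lies in (iii) $\Rightarrow$ (i) and (ii) $\Rightarrow$ (i).

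My plan is to attach to each minimal system a canonical countable locally finite subgroup of its topological full group. Given $(\fie,X)$, pick a point $x_0\in X$ and a decreasing sequence of clopen neighborhoods $B_n\downarrow\{x_0\}$. The first return map of $\fie$ to $B_n$ yields a Kakutani-Rokhlin partition $\mathcal{P}_n$ of $X$ into finitely many clopen towers; let $F_n\subset\llbracket\fie\rrbracket$ be the finite group of elements permuting the atoms of $\mathcal{P}_n$ within each tower. The $F_n$ are nested, and $A_\fie=\bigcup_n F_n$ is locally finite. Its orbits are exactly the $\fie$-orbits, except the orbit of $x_0$, which splits into its forward and backward half-orbits. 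Crucially, the $\fie$-invariant probability measures coincide with the $A_\fie$-invariant ones, and are encoded as coherent assignments of masses to the levels of the refining partitions $\mathcal{P}_n$.

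For (iii) $\Rightarrow$ (i), given $g\colon X_1\to X_2$ with $g_*M(\fie_1)=M(\fie_2)$, I would run a back-and-forth construction, building $A_{\fie_1}$ and $A_{\fie_2}$ inductively via compatible Kakutani-Rokhlin partitions whose tower data are matched on the two sides. The measure-pushing property of $g$ guarantees that at each stage the tower volumes on one side, measured against every invariant measure, agree with those on the other side after pulling back along $g$; this is exactly the combinatorial condition needed to extend the matching one more step. The limit produces a homeomorphism $h\colon X_1\to X_2$ realizing a spatial isomorphism between the $A_\fie$'s, from which one reads off that $h$ is an orbit equivalence (modulo a single orbit that can be corrected by hand).

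For (ii) $\Rightarrow$ (i), I would use a reconstruction-type argument, showing that a copy of $A_\fie$ can be recognized inside the abstract group $[\fie]$ up to conjugacy, so that an abstract isomorphism $[\fie_1]\cong[\fie_2]$ induces a group isomorphism between the $A_\fie$'s, which can then be promoted via step (iii) $\Rightarrow$ (i). The main obstacle I expect is the synchronization in the back-and-forth: one must refine the Kakutani-Rokhlin partitions on both sides coherently so that the limit matching is a continuous bijection that respects the orbit structure. This requires, at each stage, careful bookkeeping of the remaining freedom on each tower, and is where the invariant-measure hypothesis is genuinely used rather than being a mere compatibility condition.
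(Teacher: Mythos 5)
First, a point of context: the paper does not prove this theorem at all. It is quoted from Giordano--Putnam--Skau as background, and the elementary proof the paper actually supplies is of the companion result on \emph{strong} orbit equivalence (Corollary 4.11, i.e.\ Theorem \ref{gps1} in the text), via the groups $\Gamma^\fie_x$ and Krieger's theorem. Your easy implications are fine: (i)$\Rightarrow$(ii) by conjugation, and (i)$\Rightarrow$(iii) because cocycles of full-group elements have closed level sets, so every $\fie$-invariant measure is $[\fie]$-invariant.

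The gap is in your strategy for (iii)$\Rightarrow$(i). You propose to use $g_*M(\fie_1)=M(\fie_2)$ to synchronize Kakutani--Rokhlin partitions on both sides and produce in the limit a spatial isomorphism between $A_{\fie_1}$ and $A_{\fie_2}$ (the paper's $\Gamma^{\fie_1}_{x}$ and $\Gamma^{\fie_2}_{y}$). If that back-and-forth succeeded, you would have shown that (iii) implies $\Gamma^{\fie_1}_{x}\cong\Gamma^{\fie_2}_{y}$, which by Theorem \ref{gps1} is equivalent to \emph{strong} orbit equivalence --- and that implication is false: there exist minimal Cantor systems satisfying (iii) that are orbit equivalent but not strongly orbit equivalent (e.g.\ systems realizing the dimension groups $\Q$ and $\Q\oplus\Z$ with infinitesimal second summand). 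The reason is that the invariant measures only detect $K^0$ modulo infinitesimals, whereas matching the K-R tower data in a back-and-forth amounts to an isomorphism of the full dimension group $K^0$. So ``the tower volumes agree against every invariant measure'' is \emph{not} the combinatorial condition needed to extend the matching one more step: two collections of towers can have equal mass for every invariant measure while no common refinement aligns their integer multiplicities, the discrepancy being an infinitesimal. This is exactly the obstruction that Giordano--Putnam--Skau's absorption theorem exists to overcome, and it is also why the orbit of the base point, which you propose to ``correct by hand,'' cannot be corrected by hand: re-gluing the two half-orbits into one orbit without creating extra cocycle discontinuities is the deep step. The same objection undercuts your route for (ii)$\Rightarrow$(i): if $A_\fie$ were abstractly recognizable inside $[\fie]$ up to isomorphism, then $[\fie_1]\cong[\fie_2]$ would force strong orbit equivalence, which is false; the correct argument is a direct reconstruction theorem showing that any abstract isomorphism $[\fie_1]\to[\fie_2]$ is implemented by a homeomorphism, which is then automatically an orbit equivalence.
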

In particular, point (\ref{GPS2-iii}) implies that there exist continuum many pairwise non orbit equivalent Cantor minimal systems, 
see for example {\cite{Dow}}.

Giordano--Putnam--Skau also considered a notion which differs slightly from orbit equivalence and that we are concerned with in this article, which they call \emph{strong orbit equivalence}. 
To explain it, let us introduce some terminology: an orbit equivalence $h$ between two minimal systems $(\fie,X)$ and $(\psi,X)$ gives rise
to two applications $n$ and $m$ from $X$ to $\Z$ such that
$$\forall x \in X \ h(\fie(x))=\psi^{n(x)}(h(x)) \text{ and conversely }\inv{h}(\psi(x))=\fie^{m(x)}(\inv{h}(x)).$$
These two applications are called \emph{cocycles associated to $h$}. The orbit equivalence $h$ is called a \emph{strong orbit equivalence} 
if both cocycles $n$ and $m$ have at most one point of discontinuity. This notion is, as the next theorem highlights, closely related to 
the group $\Gamma^\fie_{x}$ (associated to a minimal system $(\fie,X)$ and a point $x\in X$) consisting of all elements $\gamma$ of 
$\llbracket\fie\rrbracket$ such that $\gamma(Orb^+_\fie(x))=Orb^+_\fie(x)$.
\begin{thmnonnum}[{\cite{GPS2}, Corollary 4.11}]
Two Cantor minimal systems $(\fie,X)$ and $(\psi,X)$ are strong orbit equivalent if and only if $\Gamma^\fie_{x}$ and $\Gamma^\psi_{y}$ are isomorphic as abstract
groups for all $x,y\in X$.
\end{thmnonnum}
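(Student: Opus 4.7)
The plan is to prove the two implications separately; the forward direction is a direct cocycle computation, while the converse requires recovering dynamical information from an abstract group isomorphism. As a preliminary step, I would establish that for a fixed minimal system $(\fie,X)$ the isomorphism class of $\Gamma^\fie_x$ does not depend on $x$: given $x,x'\in X$, an element of $\llbracket\fie\rrbracket$ sending $x$ to $x'$, produced from a suitable Kakutani--Rokhlin partition via minimality, conjugates $\Gamma^\fie_x$ onto $\Gamma^\fie_{x'}$ up to compatibility with the positive orbits. This reduces the statement to the existence of a single isomorphism $\Gamma^\fie_x\cong\Gamma^\psi_y$.

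For the forward direction, suppose $h\colon X\to X$ is a strong orbit equivalence with cocycles having at most one discontinuity, at $x_0$ and $y_0=h(x_0)$ respectively. Conjugation by $h$ sends $[\fie]$ onto $[\psi]$. The key claim is that if $\gamma\in\Gamma^\fie_{x_0}$ then $h\gamma h^{-1}\in\Gamma^\psi_{y_0}$: writing the cocycle of $h\gamma h^{-1}$ at $h(x)$ as an iterated sum $\sum_{i=0}^{n_\gamma(x)-1} n(\fie^i(x))$ of values of the cocycle $n$ of $h$, one checks that the potential jumps arising whenever $\fie^i(x)$ crosses $x_0$ cancel out in pairs precisely when $\gamma$ preserves $Orb^+_\fie(x_0)$. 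Applying the same reasoning to $h^{-1}$ gives the desired isomorphism.

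The converse is the main obstacle. Given an abstract isomorphism $\Phi\colon\Gamma^\fie_x\to\Gamma^\psi_y$, I would first exhibit both groups as locally finite through their natural Kakutani--Rokhlin filtration: choose a decreasing sequence of clopen neighbourhoods of $x$ (resp.\ $y$) as bases of Kakutani--Rokhlin partitions, and for each such partition take the subgroup of permutations preserving it while being compatible with the tower order and with the distinguished point. This yields an exhausting increasing union $\Gamma^\fie_x=\bigcup_n G_n$. The crux is to characterize such a filtration algebraically inside $\Gamma^\fie_x$ --- through a description of the relevant finite subgroups along with their normalizers and centralizers --- so that $\Phi$ transports the $\fie$-filtration to a $\psi$-filtration. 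From the natural action of the $G_n$ on the clopen pieces of the partitions one can then reconstruct the Cantor space and a matching between the two systems, producing a homeomorphism $h$ whose cocycles are continuous everywhere except possibly at $x$.

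The principal difficulty is precisely this algebraic rigidity: without a group-theoretic characterization of the Kakutani--Rokhlin subgroups (or of some cofinal variant), an abstract isomorphism carries no dynamical information. I expect the analysis to rest on the fine structure of the $G_n$, which are built from symmetric groups permuting tower columns and interact through refinement, and on the way they sit inside one another. Once this rigidity is in place, matching the two filtrations is essentially combinatorial, and the definition of strong orbit equivalence is built into the reconstruction, since $x$ and $y$ appear, by design, as the unique possible discontinuities of the resulting cocycles.
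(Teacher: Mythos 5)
Your high-level picture (one direction by a cocycle computation, the other by extracting dynamical data from an abstract isomorphism) matches the shape of the problem, but the proposal has genuine gaps at each of its three steps. First, the preliminary reduction fails as stated: every element of $\llbracket\fie\rrbracket$ preserves each $\fie$-orbit, so there is no element of $\llbracket\fie\rrbracket$ sending $x$ to $x'$ unless $x'\in Orb_\fie(x)$; and even when $x'=\fie^k(x)$, conjugating $\Gamma^\fie_{x}$ by such a $g$ only preserves $Orb^+_\fie(x')$ up to a \emph{finite} symmetric difference, so $g\Gamma^\fie_x\inv{g}$ need not equal $\Gamma^\fie_{x'}$. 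The paper gets base-point independence differently: it proves that $\Gamma^\fie_x$ and $\Gamma^\fie_{x'}$ induce the same orbit equivalence relation on $CO(X)$ (the dimension range), by comparing how the towers of one sequence of Kakutani--Rokhlin partitions decompose into stackings of towers of the other, and then converts equality of dimension ranges into abstract isomorphism via Krieger's theorem.

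Second, the ``algebraic rigidity'' you correctly identify as the crux of the converse is precisely Krieger's reconstruction theorem for ample groups (abstract isomorphism implies spatial isomorphism implies isomorphism of dimension ranges); your sketch stops exactly where that work begins, and without that theorem (or the back-and-forth proving it) nothing forces $\Phi$ to respect any filtration. Moreover, even once a spatial conjugating homeomorphism $g$ with $\Gamma^\psi_y=g\Gamma^\fie_x\inv{g}$ is in hand, you still owe the short but necessary argument that $g$ is a strong orbit equivalence: near any $z$ other than $\inv{\fie}(x)$ there is $h\in\Gamma^\fie_x$ agreeing with $\fie$ on a neighbourhood of $z$, and conjugating $h$ shows the cocycle of $g$ is locally constant at $z$. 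Third, in the direction SOE $\Rightarrow$ isomorphism, the pairwise cancellation of jumps in the telescoped cocycle is the entire content of your claim and is only asserted; it is exactly the delicate point, since the partial sums $\sum_i n(\fie^i(x))$ genuinely can jump when the segment from $x$ to $\gamma(x)$ passes through the discontinuity point. The paper sidesteps this computation altogether: it reduces (via a piecewise-decomposition lemma) to showing $\fie(A)\in Orb_{\Gamma^\psi_{x'}}(A)$ for atoms $A$, and proves this by choosing a $\psi$-invariant measure, cutting $A$ into pieces of small measure, and selecting for each piece a base point whose towers keep that piece uniformly far from their top and bottom. As it stands, each of these three gaps would need to be filled for the proposal to constitute a proof.
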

An elementary proof of this result, which justifies our original interest for the groups $\Gamma_x^\fie$, is given in section \ref{section: GPS proof}. By \emph{elementary}, 
we mean that the proof is only based on manipulations on closed-open sets.
The objects we mainly use are sequences of Kakutani-Rokhlin partitions, the point of which is to describe in finite time the image of (almost) any closed-open set by a minimal homeomorphism (details in section \ref{subsection: KR partitions}). Our general strategy is based on a theorem of Krieger ({\cite{Kr}, Theorem 3.5}). Recall that
two subgroups $\Gamma$ and $\Lambda$ of $ \homeo(X)$ are \emph{spatially isomorphic} if there exists a homeomorphism $g$ of $X$ such that 
$$\Gamma=g\Lambda\inv{g}.$$ Krieger's theorem establishes that two countable, locally finite groups $\Gamma$ and $\Lambda$ satisfying some extra properties, called \emph{ample groups},
are spatially isomorphic if and only if there exists a homeomorphism $g$ such that 
$$\accfonction{\overline{g}}{\quot{CO(X)}{\Gamma}}{\quot{CO(X)}{\Lambda}}{Orb_\Gamma(A)}{Orb_\Lambda(g(A))}$$
is a bijection. 

To set the stage for the application of Krieger's theorem, we first recall the construction of the groups $\Gamma_x^\fie$
out of a sequence of Kakutani-Rokhlin partitions, which shows that they are locally 
finite (and actually even ample). Afterwards, an important step is to show that the relation induced by the $\Gamma_x^\fie$ on closed-open sets does not depend
on the point $x$, i.e using Krieger's vocabulary that they have the same \emph{dimension range}.
We also notice that this approach recovers the result that $\overline{\Gamma^\fie_x}=\overline{\llbracket\fie\rrbracket}$ (see {\cite{IM}, Theorem 5.6}, or {\cite{GM}})

Finally, the groups $\Gamma_x^\fie$ are not necessarily simple, but in section \ref{section: Borel complexity}, adapting arguments from {\cite{BM}} on the one hand, and {\cite{Med}}
on the other hand, we show respectively that the groups $D(\Gamma_x^\fie)$ are always simple, and that any isomorphism between two such groups is spatial, which makes them locally finite, simple groups attesting to strong orbit equivalence of homeomorphisms they are attached to, as desired.

\section{Preliminaries}\label{section: Preliminaries}
For the remainder of this article, $X$ denotes the Cantor space; $\fie$ denotes a minimal homeomorphism of $X$ and $x_0$ a point of $X$; we call a 
closed-open set a clopen set, and $CO(X)$ stands for the Boolean algebra of clopen subsets of $X$. Moreover, we use the notation $\llbracket i;j\rrbracket$ to denote the interval of integers $\{i,i+1\ldots,j\}$, which is by convention empty if $i>j$. Finally we draw the reader's attention to the following fact: for us, $\N$ is the set
of non-negative integers, and $Orb_\fie^+(x_0)=\left\{\fie^n(x_0)\right\}_{n\in\N}$.

Most of this section consists of well-known facts and objects, and the reader knowing what a Kakutani-Rokhlin partition is can skim through it until Theorem \ref{Krthm} and discussion below.
\medbreak

The following property is very useful to build and understand elements of a topological full group in practice:
\begin{prop}
Let $f$ be a homeomorphism of $X$. Then $f\in\llbracket\fie\rrbracket$ if and only if there exist $A_1,\ldots ,A_n\in CO(X)$
and $k_1,\ldots,k_n\in\Z$ such that $X=\bigsqcup_{i=1}^n A_i$ and $\restr{f}{A_i}=\restr{\fie^{k_i}}{A_i}$.
\end{prop}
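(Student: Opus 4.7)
The statement is a standard characterization, and both directions reduce to the fact that continuous maps from a compact space to $\Z$ (equipped with the discrete topology) are precisely the locally constant maps with finite range. My plan is to handle each direction separately, using the uniqueness of the cocycle (which is guaranteed by minimality of $\fie$, as noted in the preceding remark).

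For the ``if'' direction, I would start from a partition $X=\bigsqcup_{i=1}^n A_i$ into clopens with $\restr{f}{A_i}=\restr{\fie^{k_i}}{A_i}$. Then the map $g\colon X\to\Z$ defined by $g(x)=k_i$ for $x\in A_i$ is well-defined since the $A_i$ are disjoint, and it is continuous since each $A_i$ is clopen (so $g$ is locally constant). By construction $f(x)=\fie^{g(x)}(x)$ for every $x$, so $f\in[\fie]$ and by uniqueness of the cocycle, $n_f=g$ is continuous, i.e. $f\in\llbracket\fie\rrbracket$.

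For the ``only if'' direction, assume $f\in\llbracket\fie\rrbracket$, so that $n_f\colon X\to\Z$ is continuous. Since $X$ is compact and $\Z$ is discrete, the image $n_f(X)$ is a compact subset of $\Z$, hence finite; write it as $\{k_1,\ldots,k_n\}$. For each $i$, set $A_i=n_f^{-1}(\{k_i\})$. Each singleton $\{k_i\}$ is clopen in $\Z$, so $A_i$ is clopen in $X$ by continuity of $n_f$. These sets are pairwise disjoint (since the $k_i$ are distinct) and cover $X$ (since $\{k_1,\ldots,k_n\}=n_f(X)$), so they partition $X$. Finally, for any $x\in A_i$ we have $f(x)=\fie^{n_f(x)}(x)=\fie^{k_i}(x)$, which gives $\restr{f}{A_i}=\restr{\fie^{k_i}}{A_i}$.

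There is no real obstacle here; the only point that deserves care is invoking the uniqueness of the cocycle (from minimality of $\fie$) when I want to identify the explicitly constructed $g$ with $n_f$ in the first direction. Everything else is a direct application of compactness of $X$ plus discreteness of $\Z$.
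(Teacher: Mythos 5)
Your proof is correct, and it is the standard argument the paper has in mind (the paper states this proposition without proof, treating it as routine): the ``if'' direction amounts to observing that a map constant on the pieces of a finite clopen partition is continuous and must coincide with $n_f$ by uniqueness of cocycles, and the ``only if'' direction is exactly compactness of $X$ plus discreteness of $\Z$. Nothing is missing.
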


The following theorem is well-known (and easy to check in this situation):
\begin{thm}\label{stone}(Stone's representation theorem for Boolean algebras)

For every automorphism $\fct{\alpha}{CO(X)}{CO(X)}$, there exists a (unique) homeomorphism $g$ on $X$ that extends 
$\alpha$ (i.e $g(C)=\alpha(C)$ for every $C\in CO(X)$)
\end{thm}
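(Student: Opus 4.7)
The plan is to rely on the standard correspondence between points of the Cantor space $X$ and ultrafilters of the Boolean algebra $CO(X)$. For each $x \in X$, set $U_x = \{C \in CO(X) : x \in C\}$; this is an ultrafilter of $CO(X)$. Conversely, since $X$ is compact and Hausdorff with $CO(X)$ as a basis, the finite intersection property guarantees that $\bigcap_{C \in U} C$ is nonempty for every ultrafilter $U$ on $CO(X)$, and Hausdorffness forces the intersection to be a single point, which I denote $x_U$.

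With this correspondence in hand, I would define $g \colon X \to X$ by $g(x) = x_{\alpha(U_x)}$, where $\alpha(U_x) = \{\alpha(C) : C \in U_x\}$. Since $\alpha$ is a Boolean automorphism (it preserves $\cup$, $\cap$, complement, $\emptyset$ and $X$), the set $\alpha(U_x)$ is indeed an ultrafilter, so the definition makes sense. By construction, $g(x) \in \alpha(C)$ if and only if $\alpha(C) \in \alpha(U_x)$, equivalently $C \in U_x$, i.e.\ $x \in C$; this rewrites as $g(C) = \alpha(C)$ for every clopen $C$, which is exactly the required extension property.

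Finally I would check that $g$ is a homeomorphism. Applying the same construction to $\inv{\alpha}$ yields a two-sided inverse of $g$, so $g$ is bijective. For continuity, every basic open set has the form $\alpha(C)$ since $\alpha$ is surjective on $CO(X)$, and the equation $g(C) = \alpha(C)$ gives $\inv{g}(\alpha(C)) = C$; a continuous bijection between compact Hausdorff spaces is a homeomorphism. Uniqueness is immediate: any homeomorphism extending $\alpha$ is determined pointwise by its values on the clopen neighbourhoods of each point, and $CO(X)$ separates points. I do not expect any significant obstacle; the only mildly subtle ingredient is the ultrafilter-to-point correspondence, which is a standard consequence of compactness together with the existence of a clopen basis.
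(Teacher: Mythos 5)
Your proof is correct and complete: the ultrafilter--point correspondence, the definition $g(x)=x_{\alpha(U_x)}$, the verification that $g^{-1}(\alpha(C))=C$, the inverse obtained from $\alpha^{-1}$, and the uniqueness via separation of points by clopens are all sound. The paper gives no proof of this statement (it is quoted as ``well-known and easy to check''), and your argument is exactly the standard Stone-duality proof being implicitly invoked there.
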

\begin{rmk}
$\aut(CO(X))$ is a closed subset of $CO(X)^{CO(X)}$ equipped with the product topology (of the discrete topology on $CO(X)$). Thus this theorem implies that $\homeo(X)$ inherits a Polish group structure from that topology; a sub-basis is given by the sets $$[A\ra B]=\{f\in \homeo(X)\colon f(A)=B\}$$ where $A$ and $B$ are clopen sets.
Actually, this is the only Polish topology on $\homeo(X)$ (see {\cite{Ros-Sol}}).
We always consider $\homeo(X)$ as endowed with this topology, and think of it in this way, even though one can use a complete metric on $X$ and think of $\homeo(X)$ as being endowed with the topology of uniform convergence.
\end{rmk}

\subsection{Kakutani-Rokhlin partitions}\label{subsection: KR partitions}

Kakutani-Rokhlin partitions (for which we will use the term "K-R partitions" for brevity) are an essential tool for an elementary study of the dynamics of minimal homeomorphisms.

The idea is the following: given a nonempty clopen set $A$, we look at the first return map
$$\accfonction{\fie_A}{A}{A}{x}{\fie^{\tau_A(x)}(x)} \ , \text{ where}$$ 
$$\accfonction{\tau_A}{A}{\N}{x}{\min\{k>0\colon \fie^k(x)\in A\}}$$
is well-defined because a homeomorphism is minimal if and only if every forward orbit is dense in $X$. We can also consider $\fie_A$ as a homeomorphism of $X$ by requiring $\restr{{\fie_A}}{A^c}=\restr{id}{A^c}$. Since $A$
is compact and $\tau_A$ is continuous, $\tau_A(A)$ is finite, and $A$ admits a finite clopen partition consisting of points that return to $A$ in a certain
number of steps. 
\begin{figure}[h]
    \centering
    \includegraphics[scale=0.4]{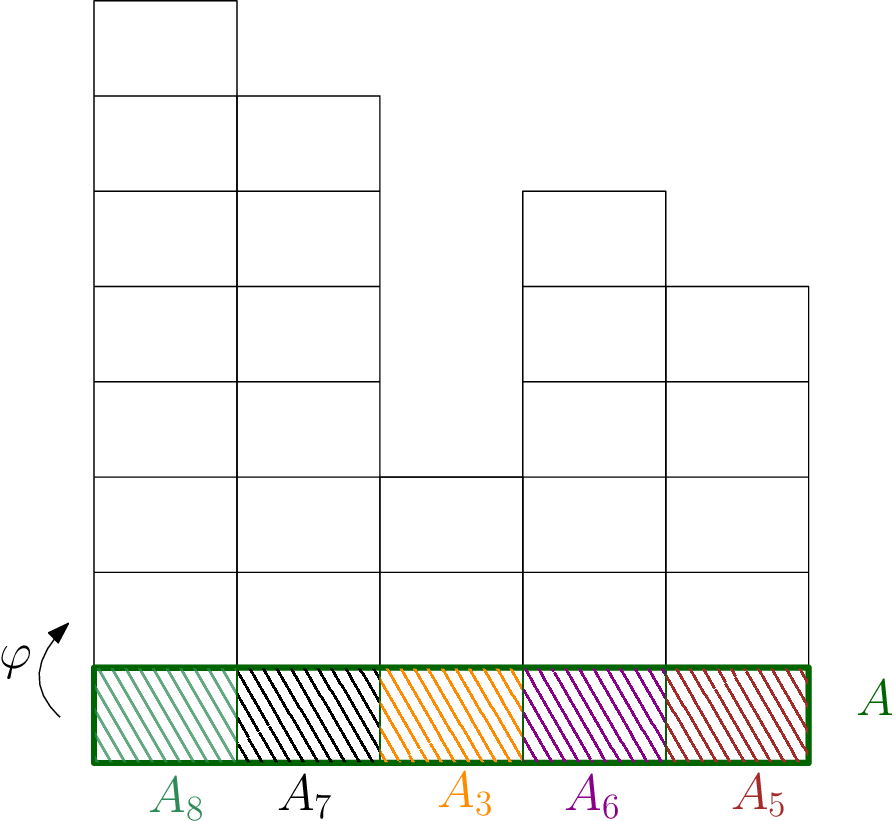}
    \caption{K-R partition built from $A$}
    \label{KR-partition}
\end{figure}
More formally, $A=\bigsqcup_{k\in\N}A_k$, where $A_k := \inv{\tau_A}({k})$ is clopen, and all but a finite number of them are empty. 
We then obtain a partition of $X$ that can be represented as on figure \ref{KR-partition}.

More generally we define K-R partitions as follows:

\begin{deff}
A \textit{K-R partition associated to $\fie$} is a clopen partition 
$$\Xi=(D_{i,j})_{i\in\llbracket1,N\rrbracket, 
j\in\llbracket0,H_i-1\rrbracket}$$ 
of $X$ such that $\fie(D_{i,j-1})=D_{i,j}$ for all $i\in\llbracket1,N\rrbracket$ and all $j\in\llbracket1,H_i-1\rrbracket$.

We call, for $i\in\llbracket1,N\rrbracket$, \textit{tower $i$ of $\Xi$} the set $T_i=\bigsqcup_{j\in\llbracket0,H_i-1\rrbracket}D_{i,j}$,
and \textit{height} of this tower the number $H_i$. We also talk about the \textit{$k$-th floor} of $\Xi$ to designate
$\bigsqcup_{i\in\llbracket1,N\rrbracket}\fie^k(D_{i,0})$. The 0-th floor is called \textit{the base} of $\Xi$, and denoted by $B(\Xi)$,
and the $-1$-th one is called \textit{the top} of the partition.

In the case where the partition carries an index ($\Xi_n$ instead of $\Xi$), which will be the case very soon, we talk about the tower $T^n_i$, the atom $D^n_{i,j}$, etc.
\end{deff}

\begin{rmk}
Note that $\bigsqcup_{i\in\llbracket1,N\rrbracket}D_{i,H_i-1}$ is mapped by $\fie$ onto $B(\Xi)$, and thus is the top of the partition. 
However, $D_{i,H_i-1}$ can be mapped anywhere in $B(\Xi)$.
\end{rmk}

\begin{notation}
We denote by $\langle \Xi \rangle$ the Boolean algebra generated by the atoms \newline $(D_{i,j})_{i\in\llbracket1,N\rrbracket, j\in\llbracket0,H_i-1\rrbracket}$ of $\Xi$.
\end{notation}

The idea behind K-R partitions is that they represent how $\fie$ acts on the atoms of $\Xi$ which are not contained in the top of $\Xi$.
We want to get better and better approximations of $\fie$ in terms of how it acts on clopen sets. In the  same spirit as Theorem \ref{stone}, one can check 
that knowing how $\fie$ acts on clopen sets that does not contain a 
particular point determines $\fie$. In our case, we want to construct a sequence of K-R partitions in which every clopen set appears,
and such that the intersection of their bases (and hence of their tops) is a single point.
To do that, we have to be able to refine a K-R partition into another one, without losing information we have already obtained.
This is the purpose of the following proposition:

\begin{prop}\label{prop: raffinement d'une KR-part contenant un clopen fixé}
Let $A\in CO(X)$, and $\Xi$ be a K-R partition. Then there exists a K-R partition $\Xi'$ finer than $\Xi$ such that $A\in  \langle\Xi' \rangle$.
\end{prop}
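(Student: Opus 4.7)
The plan is to refine the base of each tower of $\Xi$ using the information carried by $A$, then propagate this refinement upwards by $\fie$. This construction will automatically produce a K-R partition because the vertical map structure is preserved by pushing forward through $\fie$.

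More precisely, write $\Xi = (D_{i,j})_{i,j}$ with towers $T_i$ of height $H_i$ and base $B(\Xi) = \bigsqcup_i D_{i,0}$. Since $A$ is clopen, for each $i \in \llbracket 1, N\rrbracket$ and each $j \in \llbracket 0, H_i - 1\rrbracket$, the set $\fie^{-j}(A \cap D_{i,j})$ is a clopen subset of $D_{i,0}$. Let $\mathcal{F}_i$ be the finite Boolean algebra of subsets of $D_{i,0}$ generated by these $H_i$ clopens, and let $(E^i_\sigma)_\sigma$ denote its atoms (indexed, say, by itineraries $\sigma \in \{0,1\}^{H_i}$, keeping only the nonempty ones). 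By construction, for each fixed $j$, each $E^i_\sigma$ lies either entirely inside $\fie^{-j}(A)$ or entirely outside it.

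Now define
\[
\Xi' = \left\{ \fie^j(E^i_\sigma) \;:\; 1 \leq i \leq N,\; \sigma \text{ nonempty atom of } \mathcal{F}_i,\; 0 \leq j \leq H_i - 1 \right\}.
\]
The relation $\fie(\fie^{j-1}(E^i_\sigma)) = \fie^j(E^i_\sigma)$ is tautological, so $\Xi'$ satisfies the vertical K-R condition. It is a clopen partition of $X$ because each column $\bigsqcup_j \fie^j(E^i_\sigma)$ is a subtower contained in $T_i$, and the union over $(i,\sigma,j)$ recovers $\bigsqcup_{i,j} D_{i,j} = X$. It refines $\Xi$ since every atom of $\Xi'$ is contained in an atom of $\Xi$.

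It remains to check that $A \in \langle \Xi' \rangle$. For each $i,j$ one has $A \cap D_{i,j} = \fie^j(\fie^{-j}(A) \cap D_{i,0})$, and $\fie^{-j}(A) \cap D_{i,0}$ belongs to $\mathcal{F}_i$, hence is a disjoint union of those atoms $E^i_\sigma$ with $\sigma(j) = 1$. Pushing forward by $\fie^j$ expresses $A \cap D_{i,j}$ as a union of atoms of $\Xi'$, and summing over $(i,j)$ gives the claim. The only mildly delicate point is making sure the itinerary-based refinement of $D_{i,0}$ is genuinely finite (guaranteed because $H_i$ is finite and $\mathcal{F}_i$ is generated by finitely many clopens); once this is noted, everything else is bookkeeping.
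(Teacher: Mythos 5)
Your construction is correct and is essentially the paper's proof in different clothing: the atoms $E^i_\sigma$ of your Boolean algebra $\mathcal{F}_i$ are exactly the equivalence classes of the relation the paper defines on $D_{i,0}$ (two points being equivalent iff they have the same itinerary relative to $A$ under $\fie^0,\ldots,\fie^{H_i-1}$), and pushing them up the tower is the same cutting procedure. Nothing is missing; the verification that $A\in\langle\Xi'\rangle$ is carried out in the same way.
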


\begin{figure}[h]
    \centering
    \includegraphics[scale=0.4]{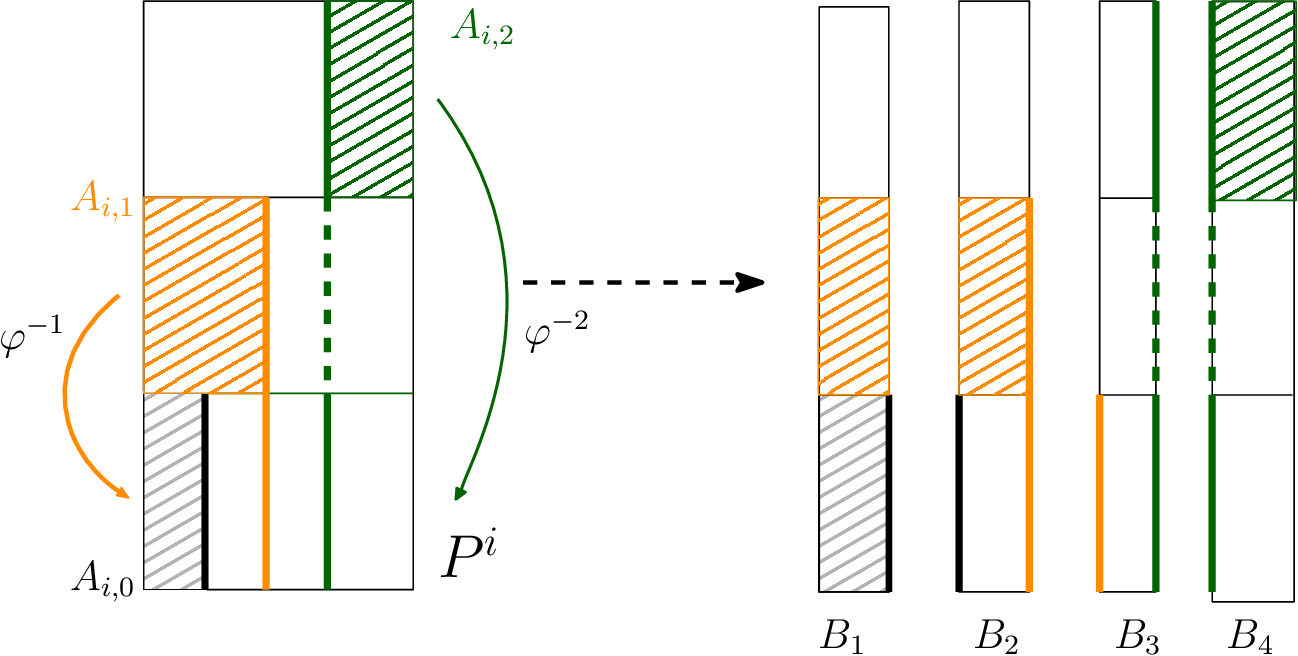}
    \caption{Cutting process in tower $i$ of $\Xi$, $A_{i,j}=A\cap D_{i,j}$}
    \label{raffinementclopen}
\end{figure}
\begin{proof}
For each tower $T_i$ of $\Xi$, we define an equivalence relation $\mathscr{R}_i$ on $D_{i,0}$ by
\[x\mathscr{R}_i y \Leftrightarrow  \forall j<H_i \ \left( (\fie^j(x)\in A \wedge \fie^j(y)\in A) \vee (\fie^j(x)\in A^c \wedge\fie^j(y)\in A^c) \right)\] 

Denote by $P^i$ the partition of $D_{i,0}$ associated to $\mathscr{R}_i$. (cf figure \ref{raffinementclopen})

Denoting by $(B_l)_{l\in\llbracket0,N_i\rrbracket}$ the atoms of $P^i$, we just "cut" the $i-th$ tower into $N_i$ towers whose bases are the $B_l$'s.
Following this method for each tower, we obtain a K-R partition $\Xi'$ finer than $\Xi$ such that every $A\cap D_{i,j}$ is in $ \langle\Xi' \rangle$, 
and so $A$ is also in $ \langle\Xi' \rangle$.
\end{proof}

Let $(U_n)_{n\in\N}$ be a clopen basis  of the topology. Applying Proposition \ref{prop: raffinement d'une KR-part contenant un clopen fixé}
inductively, we obtain the following:
\begin{cor}\label{KR-properties}
 There exists a sequence of K-R partitions $(\Xi_n)_{n\in\N}$ such that the following conditions hold for all $n\in\N$
 :
 \begin{enumerate}[(i)]
    \item\label{KR-properties1} $\Xi_{n+1}$ is finer than $\Xi_n$
    \item\label{KR-properties2} $B(\Xi_{n+1})\subset B(\Xi_n)$ and $\bigcap_i B(\Xi_i)=\{x_0\}$ 
    \item\label{KR-properties3} $U_n\in  \langle\Xi_n \rangle$
    \item\label{KR-properties4} The minimal height of $\Xi_n$ is greater than $n$.
\end{enumerate}
\end{cor}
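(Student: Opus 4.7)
The plan is to build the sequence $(\Xi_n)_{n\in\N}$ by induction, combining two ingredients: the first-return construction from a small clopen neighborhood of $x_0$, which produces large tower heights and a small base, and Proposition \ref{prop: raffinement d'une KR-part contenant un clopen fixé}, which lets us adjoin to the Boolean algebra both the atoms of the previously built partition (so as to ensure refinement) and the clopen $U_n$ from the given basis.

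The only non-bookkeeping ingredient is the following preliminary remark: for every integer $N$, there exists a clopen $W\ni x_0$ with $\tau_W(x)>N$ for all $x\in W$. Indeed, since $\fie$ is a minimal homeomorphism of an infinite Cantor space, the points $x_0,\fie(x_0),\ldots,\fie^N(x_0)$ are pairwise distinct, so one can pick a clopen $V\ni x_0$ disjoint from $\fie^{-k}(V)$ for each $k=1,\ldots,N$, which forces $\tau_V>N$ on $V$. Shrinking further, one can additionally require $\diam(W)<2^{-(N+1)}$, and, once $\Xi_n$ has been constructed, that $W\subset B(\Xi_n)$ (by replacing $W$ with $W\cap B(\Xi_n)$).

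For $n=0$, pick any clopen $W_0\ni x_0$ of diameter less than $1$, build the KR partition arising from the first-return map to $W_0$, and apply Proposition \ref{prop: raffinement d'une KR-part contenant un clopen fixé} once to adjoin $U_0$ to its algebra; call the result $\Xi_0$. For the inductive step, assuming $\Xi_n$ has been built, apply the preliminary remark with the constraints $x_0\in W_{n+1}\subset B(\Xi_n)$, $\diam(W_{n+1})<2^{-(n+1)}$ and $\tau_{W_{n+1}}>n+1$. The KR partition $\Xi_{n+1}^{(0)}$ produced by the first-return map to $W_{n+1}$ then has base $W_{n+1}$ and minimal height greater than $n+1$. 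Finally iterate Proposition \ref{prop: raffinement d'une KR-part contenant un clopen fixé} finitely many times to adjoin to $\langle\Xi_{n+1}^{(0)}\rangle$ each atom of $\Xi_n$ together with $U_{n+1}$; this yields $\Xi_{n+1}$. Inspection of the proof of that proposition shows that the refinement only cuts tower bases into finer pieces, so the total base $W_{n+1}$ and the tower heights are unchanged.

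Conditions (ii)--(iv) are now immediate from the construction, and (i) follows because every atom of $\Xi_n$ belongs to $\langle\Xi_{n+1}\rangle$ by design, which for KR partitions is equivalent to $\Xi_{n+1}$ being finer than $\Xi_n$. The main obstacle I anticipate is the long-return-time observation at the outset, on which the whole scheme rests; once it is secured, the remainder reduces to unwinding the inductive definition.
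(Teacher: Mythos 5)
Your proposal is correct and follows essentially the same route as the paper: choose a small clopen neighbourhood of $x_0$ whose first-return times exceed $n$ (the paper's Remark \ref{rmk: finite images of a small neighbourhood are disjoint} is exactly your preliminary observation), build the associated first-return K-R partition, and then apply Proposition \ref{prop: raffinement d'une KR-part contenant un clopen fixé} repeatedly to adjoin the atoms of the previous partition and the basic clopen $U_n$. Your added remarks --- that shrinking the base only increases return times, and that the refinement procedure cuts bases without altering heights or the total base --- are just the details the paper leaves implicit.
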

When we consider a sequence of K-R partitions, we assume from now on that it fulfills properties \eqref{KR-properties1} to \eqref{KR-properties4}.

\begin{rmk}\label{rmk: finite images of a small neighbourhood are disjoint}
Property \eqref{KR-properties4} is obtained thanks to aperiodicity of $\fie$: \newline 
the points $x_0, \fie(x_0),\ldots,\fie^n(x_0)$ are all distinct, thus for a sufficiently small neighbourhood $U$ of $x_0$, we have that
$U,\fie(U),\ldots,\fie^n(U)$ are pairwise disjoint.
\end{rmk}
 
\begin{deff}
We call \textit{base point} of a sequence of K-R partitions $(\Xi_n)$ the point $x_0$ that appears in property \eqref{KR-properties2}, and \textit{top point} of the 
sequence the point $\inv\fie(x_0)$.
\end{deff}

\begin{rmk}
An important thing to understand about this construction is that a tower of $\Xi_{n+1}$ is obtained by cutting (vertically) the towers of $\Xi_n$
and then stacking some of them on top of each other. This is what is called "cutting and stacking" (see figure \ref{gamma_2_partitions}; arrows tell us
where a clopen set at the top of the partition is sent by $\fie$. If there is none, it means that it is sent into the base of $\Xi_{n+1}$).
Indeed, for every $i$, $D_{i,0}^{n+1}\subset D_{i_0,0}^n\subset B(\Xi_n)$ for some $i_0$, so $\bigsqcup_{k=0}^{H_{i_0}^n-1} D^{n+1}_{i,k}$ is exactly obtained by cutting
the $i_0$-th tower of $\Xi_n$. If $D_{i,H_{i_0}^n-1}^{n+1}$ is not on the top of $\Xi_{n+1}$, it suffices to look at the tower of $\Xi_n$ in which it 
is mapped by $\fie$ to know which "tower" to stack over this one: if $\fie(D^{n+1}_{i,H_{i_0}^n-1})\subset D_{i_1,0}^n$, then 
$\bigsqcup_{k=H_{i_0}^n}^{H_{i_0}^n+H_{i_1}^n-1} D^{n+1}_{i,k}$ is obtained by cutting the $i_1$-th tower of $\Xi_n$, and so on.
\end{rmk}

\subsection{A locally finite group associated to a minimal homeomorphism}

In this section we study the group of homeomorphisms in $\llbracket\fie\rrbracket$ that preserve the non-negative semi-orbit 
of $x_0$. We denote this group 
$$\Gamma_{x_0}^\fie:=\{f\in\llbracket\fie\rrbracket \colon f(Orb^+(x_0))=Orb^+(x_0)\} \ ,$$ 
or just $\Gamma_{x_0}$ if the homeomorphism involved is clear from the context.
As recalled in the introduction, these groups are used in {\cite{GPS2}} and are one of the main objects under consideration in our article. 
For the moment we focus on explaining how they are related to K-R partitions.

\begin{figure}[ht]
    \centering
    \includegraphics[width=\linewidth]{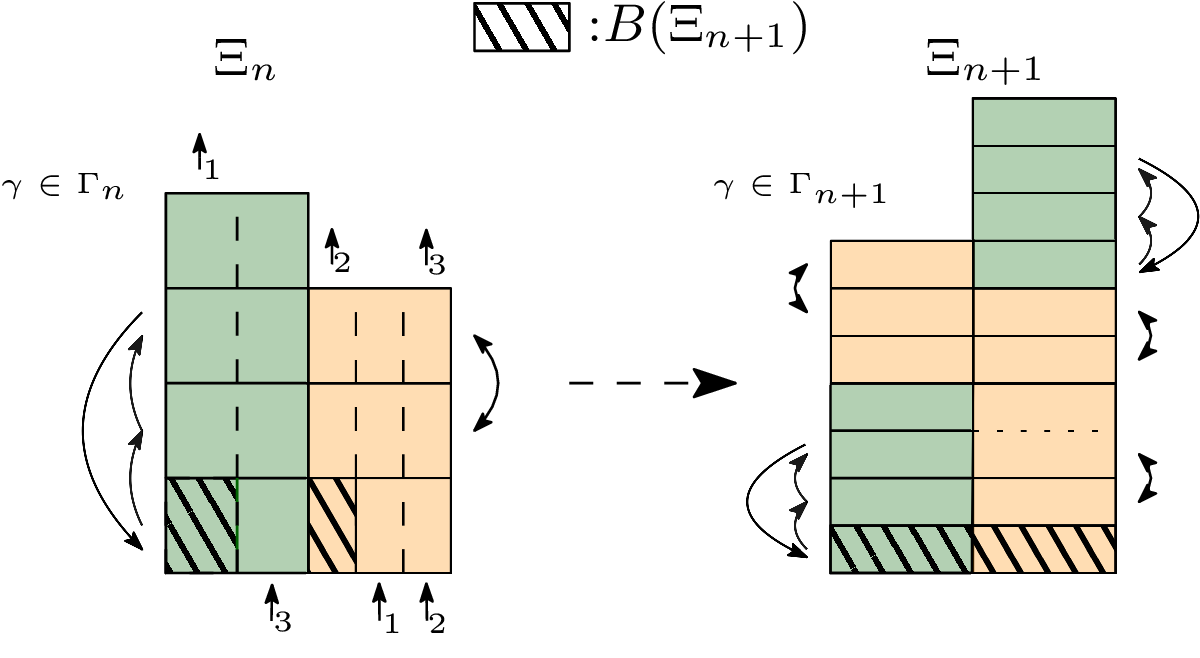}
    \caption{an element $\gamma$ in $\Gamma_n$ and in $\Gamma_{n+1}$}
    \label{gamma_2_partitions}
\end{figure}
Let $(\Xi_n)_{n\in\N}$ be a sequence of K-R partitions (satisfying properties \eqref{KR-properties1} to \eqref{KR-properties4}). For each $n\in\N$, we consider the group $\Gamma_n$ 
of elements of $\llbracket\fie\rrbracket$ that have a constant cocycle on atoms of $\Xi_n$ and that "stay" in each tower of the partition, meaning 
that an atom can not go through either the top of its tower nor its base under the action of a $\gamma$ in $\Gamma_n$.

More formally, if $(D_{i,j})_{i,j}$ are the atoms 
of $\Xi_n$ and $H_i$ is the height of the $i$-th tower, then
\[ \Gamma_n=\{f\in\llbracket\fie\rrbracket \colon \forall i,j \ \restr{f}{D_{i,j}}=\restr{\fie^k}{D_{i,j}},~ k\in\llbracket-j,H_i-j-1\rrbracket\}\]

An element of this group may be thought of as permuting atoms in each tower, but it is important to remember that it acts on points,
in particular that helps to see the inclusion $\Gamma_n\subset\Gamma_{n+1}$. 

Indeed, as $\Xi_{n+1}$ is constructed from $\Xi_n$ by cutting and stacking, each $\gamma\in\Gamma_n$ also belongs to $\Gamma_{n+1}$ 
(see figure \ref{gamma_2_partitions})
\begin{deff}
We have obtained from a sequence of K-R partitions $(\Xi_n)$ a locally finite group $\Gamma_\Xi=\bigcup_{n\in\N}\Gamma_n$. 
If there is risk of confusion, we write $\Gamma^\fie_\Xi$ to mention the homeomorphism involved.
\end{deff} 

For the moment, it seems that $\Gamma_\Xi$ depends on the sequence of K-R partitions we have chosen. We 
note that it only depends on the choice of the base point. Recall that $$\Gamma_{x_0}^\fie=\left\{\gamma\in \llbracket\fie\rrbracket\ |\ \gamma(Orb_\fie^+(x_0))=Orb_\fie^+(x_0)\right\}$$.

\begin{prop}\label{gamma_egal_orbit_pos}
The groups $\Gamma_\Xi$ and $\Gamma_{x_0}$ are equal. 
\end{prop}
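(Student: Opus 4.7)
The plan is to prove the two inclusions separately.

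For $\Gamma_\Xi \subseteq \Gamma_{x_0}$, take $\gamma \in \Gamma_n$ for some $n$ and write $x_0 \in D^n_{i_0,0}$. Whenever $\fie^t(x_0) \in D^n_{i,j}$ one has $t \geq j$, since the forward orbit enters $D^n_{i,j}$ only after first reaching the base $D^n_{i,0}$ and climbing $j$ floors. The cocycle bound $k_{i,j} \in [-j, H_i - 1 - j]$ built into the definition of $\Gamma_n$ then gives $t + k_{i,j} \geq 0$, so $\gamma(\fie^t(x_0)) \in Orb^+_\fie(x_0)$; applying the same reasoning to $\gamma^{-1} \in \Gamma_n$ yields $\gamma(Orb^+_\fie(x_0)) = Orb^+_\fie(x_0)$, so $\gamma \in \Gamma_{x_0}$.

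For $\Gamma_{x_0} \subseteq \Gamma_\Xi$, take $\gamma \in \Gamma_{x_0}$, write the cocycle partition $X = \bigsqcup_l A_l$ with $\restr{\gamma}{A_l} = \restr{\fie^{k_l}}{A_l}$, and set $M = \max_l |k_l|$; do the same for $\gamma^{-1}$. Using \eqref{KR-properties3} and \eqref{KR-properties4} I would choose $N > M$ so that every $A_l$ and every cocycle atom of $\gamma^{-1}$ lies in $\langle\Xi_N\rangle$; then the cocycle of $\gamma$ is constant on each atom of $\Xi_N$ and strictly smaller in absolute value than every $\Xi_N$-tower height. What remains is to find $n \geq N$ for which $\gamma$ preserves every $\Xi_n$-tower setwise.

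This tower-preservation step is the main obstacle. For the tower $T^N_{i_0}$ containing $x_0$, the time-versus-floor computation forces $k_{i_0,j} \geq -j$ on each atom $D^N_{i_0,j}$; applying the same bound to $\gamma^{-1} \in \Gamma_{x_0}$ and using bijectivity of $\gamma$ gives $\gamma(T^N_{i_0}) = T^N_{i_0}$. Replacing $Orb^+_\fie(x_0)$ by its complement $X \setminus Orb^+_\fie(x_0)$, which $\gamma$ also preserves, and running the time-versus-floor argument backwards from $\fie^{-1}(x_0)$ handles the tower $T^N_{i_{-1}}$ containing this top point. The remaining $\Xi_N$-towers are not controlled directly by the orbit conditions, so I would enlarge $N$ to some $n$ with $B(\Xi_n) \subseteq D^N_{i_0,0} \cap \fie(D^N_{i_{-1},H_{i_{-1}}-1})$, which is possible by \eqref{KR-properties2} since this intersection is a clopen containing $x_0$. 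Every $\Xi_n$-tower then begins inside $T^N_{i_0}$ and ends inside $T^N_{i_{-1}}$, so any intermediate $\Xi_N$-tower is sandwiched between two neighbours inside a single $\Xi_n$-tower; combined with the bound $|k_l| < N$, which forbids $\gamma$ from jumping across a whole intermediate $\Xi_N$-tower, this forces $\gamma$ to stay inside its $\Xi_n$-tower on every atom, giving $\gamma \in \Gamma_n$.
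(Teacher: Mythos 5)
Your argument for the inclusion $\Gamma_\Xi\subseteq\Gamma_{x_0}$ is essentially the paper's (the observation that $\fie^t(x_0)\in D^n_{i,j}$ forces $t\geq j$ replaces the paper's choice of $N$ so large that $x_0$ and $\fie^t(x_0)$ lie in the same tower, but the computation $t+k\geq 0$ is the same). For the converse you take a genuinely different route. The paper fixes the cocycle partition $X=\bigsqcup A_i$, sets $m_i=\min\{k\geq 0\colon \fie^k(x_0)\in A_i\}$, and then shrinks $B(\Xi_N)$ so that for $k<m_i$ the $k$-th floor of \emph{every} tower is disjoint from $A_i$; this transfers the orbit constraint $k_i\geq -m_i$ to all towers at once (and dually at the top via $Orb^{<0}_\fie(x_0)$). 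You instead control only the two towers actually visited by $x_0$ near the base and by $\inv{\fie}(x_0)$ near the top, and then pass to a finer $\Xi_n$ with $B(\Xi_n)\subseteq D^N_{i_0,0}\cap\fie(D^N_{i_{-1},H_{i_{-1}}-1})$, so that every $\Xi_n$-tower begins with a full slice of $T^N_{i_0}$ and ends with a full slice of $T^N_{i_{-1}}$; the extremal slices are handled by the orbit bounds, and every other atom sits at distance greater than $N>\max_l\abs{k_l}$ from both ends of its $\Xi_n$-tower. This is a legitimate alternative to the paper's shrinking of the base, at the cost of having to invoke the cutting-and-stacking structure of $\Xi_n$ over $\Xi_N$.

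One step should be repaired: the claim that $k_{i_0,j}\geq -j$, its analogue for $\inv{\gamma}$, and bijectivity ``give $\gamma(T^N_{i_0})=T^N_{i_0}$'' is neither justified nor true in general. The inequality only bounds the cocycle from below, so $\gamma$ may perfectly well push part of a high atom of $T^N_{i_0}$ out through the top of that tower; and the bound on $\inv{\gamma}$ constrains $\inv{\gamma}$ only on $T^N_{i_0}$ itself, hence says nothing about preimages of points lying outside it. (Indeed $\gamma\in\Gamma_n$ does not imply that $\gamma$ preserves the towers of the coarser partition $\Xi_N$.) Fortunately tower preservation at level $N$ is not what your final argument needs: the floor-wise inequalities $k\geq -j$ on $D^N_{i_0,j}$ and $k\leq H^N_{i_{-1}}-1-j$ on $D^N_{i_{-1},j}$, which you do establish, are exactly what makes the sandwich argument work --- an atom in an extremal slice cannot cross the near end of its $\Xi_n$-tower by the orbit bound, and cannot cross the far end because at least one full $\Xi_N$-slice, of height greater than $N$, separates it from that end (with the single-slice case covered by both orbit bounds at once). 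Phrased in terms of these inequalities rather than of setwise preservation of $T^N_{i_0}$ and $T^N_{i_{-1}}$, your proof is complete.
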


\begin{proof}

Let $\gamma\in\Gamma_\Xi$, and $y\in Orb^+(x_0)$. Say $y=\fie^k(x_0)$ for a $k\in\N$. By property \eqref{KR-properties4} of Corollary \ref{KR-properties}, there exists $N\in\N$ big
enough that $\gamma\in\Gamma_N$ and each tower of $\Gamma_N$ has an height greater or equal to $k+1$.
Then $x_0$ and $y$ belong to the same tower of $\Xi_N$, and $x_0\in B(\Xi_N)$. Since $\gamma\in\Gamma_N$, $\gamma(y)=\fie^j(y)$ with $j\geq -k$, 
we have $\gamma(y)\in Orb^+(x_0)$. Since the same property holds for $\inv{\gamma}$, we obtain 
$$\gamma(Orb^+(x_0))=Orb^+(x_0).$$

Conversely, let $h\in\Gamma_{x_0}$. Let $X=\bigsqcup_{i=1}^n A_i$ be a clopen partition such that $\restr{h}{{A_i}}=\restr{\fie^{k_i}}{{A_i}}$.
Define $m_i=\min\{k\geq 0 \colon \fie^{k}(x_0)\in A_i\}.$ Necessarily, $k_i\geq -m_i$. On the other hand, since $\fie^k(x_0)\notin A_i$ for all 
$k\in\llbracket0,m_i-1\rrbracket$, there exists a clopen set $U_k^i\ni\fie^k(x_0)$ disjoint from $A_i$. Thus 
$\bigcap_i\bigcap_{0\leq k<m_i}\fie^{-k}(U_k^i)$ is a neighbourhood of $x_0$, hence $B(\Xi_N)$ is contained in it for $N$ big enough.
Moreover, we can take $N$ big enough that every $A_i\in \langle\Xi_N \rangle$, and every tower in $\Xi_N$ has an height greater than $\max_i(m_i)$. 
Then every atom of $\Xi_N$ is included into one of the $A_i$'s, and an atom contained in $A_i$ cannot appear before the $m_i$-th floor 
(because the $k$-th floor is a subset of $U_k^i$ which is disjoint from $A_i$).

Since $h\in\llbracket\fie\rrbracket$, we also have 
$$h(Orb_\fie^{<0}(x_0))=Orb_\fie^{<0}(x_0)$$
(where $Orb_{\fie}^{<0}(x_0)$ stands for $\{\fie^k(x_0), k<0\}$). Thus, 
defining 
$$m'_i=\max\{k<0 \colon \fie^{k}(x_0)\in A_i\}$$
and using the same argument, we get that $k_i<-m'_i$ and an atom contained in $A_i$ cannot appear higher than the 
$m'_i$-th floor (i.e there is at least $\abs{m'_i}-1$ atoms above it before reaching the top of the tower).

This shows that $h$ cannot send an atom of $\Xi_N$ through the top or the base of the partition, and so $h\in\Gamma_\Xi$.
\end{proof}

In the next section, we use as a key ingredient the following theorem which is a combination of two results: the equivalence between Conditions \ref{thm Krieger, cond 2} and \ref{thm Krieger, cond 3} is due to Krieger ({\cite{Kr}, Theorem 3.5}), and its proof is elementary (actually, it is based on a back-and-forth argument that is easy 
to set up directly in the case of the groups $\Gamma_{x}^\fie$; we chose not to include this proof in an attempt not to overextend our claims to the reader's attention).
The equivalence between Conditions \ref{thm Krieger, cond 1} and \ref{thm Krieger, cond 2} is known since Giordano, Putnam and Skau (see {\cite{GPS2}, Theorem 4.2}) in the case $(\Gamma,\Lambda)=(\Gamma^\fie_{x_1},\Gamma^\psi_{x_2})$. New proofs based on reconstruction theorems have been developed later, and we give a proof in the case $(\Gamma,\Lambda)=(D(\Gamma^\fie_{x_1}),D(\Gamma^\psi_{x_2}))$ using this approach in Proposition \ref{prop: every isom is spatial}.

\begin{thm}\label{Krthm}
Let $\fie$,$\psi$ be minimal homeomorphisms, $x_1,x_2$ be two points in $X$, and $(\Gamma,\Lambda)$ denote either $(\Gamma^\fie_{x_1},\Gamma^\psi_{x_2})$ or their commutator subgroups $(D(\Gamma^\fie_{x_1}),D(\Gamma^\psi_{x_2}))$. Then the following are equivalent: 
\begin{enumerate}
    \item \label{thm Krieger, cond 1} The abstract groups $\Gamma$ and $\Lambda$ are isomorphic
    \item \label{thm Krieger, cond 2} There exists a homeomorphism $g$ such that $\Gamma=g\Lambda\inv{g}$
    \item \label{thm Krieger, cond 3} There exists a homeomorphism $g$
such that $$\accfonction{\overline{g}}{\bigslant{CO(X)}{\Gamma}}{\bigslant{CO(X)}{\Lambda}}{Orb_{\Gamma}(A)}{Orb_{\Lambda}(gA)} \text{  is a bijection.}$$ 

\end{enumerate}
\end{thm}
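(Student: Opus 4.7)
The implications $(2)\Rightarrow(1)$ and $(2)\Rightarrow(3)$ are immediate: conjugation by $g$ is a group isomorphism, and if $\Gamma=g\Lambda\inv{g}$ then for every $A\in CO(X)$ one has $Orb_\Gamma(g(A))=g(Orb_\Lambda(A))$, so the map $Orb_\Lambda(A)\mapsto Orb_\Gamma(g(A))$ is well-defined and bijective, with inverse induced by $\inv{g}$. The substance of the theorem therefore lies in $(3)\Rightarrow(2)$ and $(1)\Rightarrow(2)$.

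For $(3)\Rightarrow(2)$ I would run a back-and-forth argument exploiting the local finiteness of $\Gamma$ and $\Lambda$. Fix sequences of K-R partitions $(\Xi_n)$ based at $x_1$ and $(\Xi'_n)$ based at $x_2$, so that $\Gamma=\bigcup_n\Gamma_n$ and $\Lambda=\bigcup_n\Lambda_n$ with each $\Gamma_n,\Lambda_n$ finite. Starting from $g_0=g$, I construct inductively a sequence $(g_n)$ of homeomorphisms, each obtained from $g_{n-1}$ by a local modification whose "support of alteration" is a clopen of diameter tending to $0$, such that for every generator $\gamma$ of $\Gamma_n$ there exists $\lambda\in\Lambda$ with $g_n\gamma\inv{g_n}=\lambda$ on a prescribed clopen, and symmetrically for generators of $\Lambda_n$. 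The hypothesis that $\overline{g_0}$ is a bijection provides, for each atom $A$ of $\Xi_n$ and each $\gamma\in\Gamma_n$, an element of $\Lambda$ matching $g_{n-1}(A)$ with $g_{n-1}(\gamma(A))$; the piecewise description of topological full groups lets us glue finitely many such corrections into $g_n$. Uniform convergence of $(g_n)$ to a homeomorphism $g$ satisfying $g\Gamma\inv{g}=\Lambda$ then follows from the diameter control and compactness of $X$.

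The main difficulty is $(1)\Rightarrow(2)$. Given an abstract isomorphism $\psi\colon\Gamma\to\Lambda$, one must recover a spatial realization from purely algebraic data, and for this I would characterize group-theoretically, inside $\Gamma$, the family of subgroups $\Gamma[A]=\{\gamma\in\Gamma:\supp(\gamma)\subseteq A\}$ for $A\in CO(X)$. In the ample setting these subgroups admit an intrinsic description — for instance as centralizers of carefully chosen involutions, or as maximal locally finite subgroups of $\Gamma$ whose elements pairwise commute with a prescribed "complementary" family — so that $\psi$ sends the family $(\Gamma[A])_{A\in CO(X)}$ onto the analogous family in $\Lambda$. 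Since $A,B\in CO(X)$ lie in the same $\Gamma$-orbit exactly when $\Gamma[A]$ and $\Gamma[B]$ are conjugate in $\Gamma$, this yields an abstract bijection $CO(X)/\Gamma\to CO(X)/\Lambda$. One then promotes this bijection to a single homeomorphism $g$ via Stone duality (Theorem \ref{stone}), applied to a coherent choice of refinements coming from the two sequences of K-R partitions, and invokes $(3)\Rightarrow(2)$ to conclude. The key obstacle, and the only place where ampleness is genuinely used, is the algebraic definability of the subgroups $\Gamma[A]$: without a sufficient supply of involutions and of localised commuting pieces inside $\Gamma$, one could not hope to recover the Boolean algebra $CO(X)$ (modulo $\Gamma$-orbits) from the abstract group alone.
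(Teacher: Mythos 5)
Your overall architecture --- the implications from (2) being immediate, a back-and-forth for (3)~$\Rightarrow$~(2), and a reconstruction argument for (1)~$\Rightarrow$~(2) --- is exactly the one the paper intends. Be aware, however, that the paper does not actually write out a proof of (2)~$\Leftrightarrow$~(3): it defers to Krieger's back-and-forth in \cite{Kr}, and it proves (1)~$\Rightarrow$~(2) (in Proposition~\ref{prop: every isom is spatial}, for the derived groups, the other case being analogous) by invoking Fremlin's reconstruction Theorem~384D for groups with many involutions. Measured against that, your sketch has two genuine gaps. In (3)~$\Rightarrow$~(2), the convergence mechanism is misdescribed: $g_n$ cannot be obtained from $g_{n-1}$ by altering it only on a clopen of small diameter, because correctly conjugating the whole finite group $\Gamma_n$ forces a global redefinition of $g_{n-1}$. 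What one actually arranges is that $g_n(A)=g_{n-1}(A)$ for every atom $A$ of the previous partition (so the $g_n$ stabilize on an increasing sequence of finite subalgebras exhausting $CO(X)$ and converge in the topology of $\homeo(X)$ described after Theorem~\ref{stone}), and the ``back'' steps for generators of $\Lambda_n$ must be interleaved with the ``forth'' steps in a way that does not destroy what was already achieved. These points are standard but they are the entire content of Krieger's argument, and your outline does not engage with them.

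The more serious gap is in (1)~$\Rightarrow$~(2). The purely group-theoretic characterization of the subgroups $\Gamma[A]=\{\gamma\in\Gamma:\supp(\gamma)\subseteq A\}$ is asserted (``for instance as centralizers of carefully chosen involutions, or as maximal locally finite subgroups\dots'') but never established, and this is precisely the hard part --- it is the content of the reconstruction theorem that the paper imports from \cite{Fr}. Moreover, even granting it, an abstract bijection $\bigslant{CO(X)}{\Gamma}\to\bigslant{CO(X)}{\Lambda}$ is far from condition (3): that condition requires the bijection to be induced by a single homeomorphism via $A\mapsto g(A)$, and the orbit space is not a Boolean algebra, so Stone duality does not apply to it; turning ``coherent choices of representatives'' into an isomorphism of $CO(X)$ is itself another back-and-forth that you have not supplied. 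The paper's route avoids this detour entirely: Fremlin's theorem directly produces an automorphism of $RO(X)$ implementing the abstract isomorphism, one checks that it restricts to $CO(X)$ because every clopen is an intersection of supports of involutions of the group and supports of involutions are preserved, and Theorem~\ref{stone} then yields the conjugating homeomorphism, i.e.\ condition (2) without passing through (3).
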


\begin{rmk}\label{rmk : more than original Krieger is true}
Moreover, a bit more is true : if Condition \ref{thm Krieger, cond 3} is satisfied, and $(x,y),(x',y')$ are two pairs of point belonging respectively to different $\Lambda$ and $\Gamma$ orbits, then the homeomorphism $g$ of Condition \ref{thm Krieger, cond 2} can be chosen such that $g(x)=x'$ and $g(y)=y'$. This is not part of the original theorem in {\cite{Kr}} but can easily be seen following the proof. A proof of a much stronger generalization is given in {\cite{MR} (Theorem 3.11)}.
\end{rmk}

\begin{deff}[Krieger's vocabulary]\label{def: dimension range}
The relation induced by a group $\Gamma^\fie_x$ on $CO(X)$ is called \emph{dimension range of $\Gamma^\fie_x$}. A homeomorphism $g$ such that $\overline{g}$
(defined as in \ref{thm Krieger, cond 3}) is a bijection is \emph{an isomorphism between the dimension ranges of $\Gamma^\fie_x$ and $\Gamma^\psi_y$}, 
and those two groups are said to have \emph{isomorphic dimension ranges}.
\end{deff}

\section{An elementary proof of Giordano, Putnam and Skau's characterization of strong orbit equivalence}\label{section: GPS proof}

We are now ready to prove Giordano, Putnam and Skau's characterization of strong orbit equivalence. Let us recall the definition of strong orbit equivalence \nolinebreak:
\begin{deff}
Two minimal homeomorphisms $\fie$ and $\psi$ are called \emph{strong orbit equivalent} if there exists a homeomorphism $g$ such that 
$$\forall x\in X \  g(Orb_\fie(x))=Orb_\psi(g(x))$$
and such that the two associated cocyles $n$ and $m$, defined by 
 $$\forall x\in X \  g(\fie(x))=\psi^{n(x)}(g(x)) \text{ and } \inv{g}(\psi(x))=\fie^{m(x)}(\inv{g}(x))$$
have at most one point of discontinuity each.
\end{deff}

First of all we need to  gain a better understanding of the relation induced by $\Gamma_{x}$ on clopen sets (the dimension range of $\Gamma_x$, 
cf definition \ref{def: dimension range}). We have seen that the group does not depend on the sequence of partitions out of which it is 
constructed. Now we go further, showing that the dimension range of $\Gamma_x$ does not depend on the base point $x$ either. 

\begin{lema}\label{lemma GPS SOE-changing base point does not change dimension range}
For every $x,x'\in X$, $\Gamma_{x}$ and $\Gamma_{x'}$ have the same dimension range.
\end{lema}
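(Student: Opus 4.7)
My plan is to show, by a symmetric argument, that for any $\gamma\in\Gamma_x$ satisfying $\gamma(A)=B$ (with $A,B\in CO(X)$), one can construct some $\gamma'\in\Gamma_{x'}$ still satisfying $\gamma'(A)=B$. I would do this by building an explicit K-R partition $\Xi^{*}$ based on a clopen neighbourhood of $x'$, in which the same clopen equivalence is realized by a tower-preserving permutation.

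Concretely, I take $N$ large enough that the K-R partition $\Xi_N$ based at $x$ satisfies $\gamma\in\Gamma_N$, $A,B\in\langle\Xi_N\rangle$, and $x'\notin B(\Xi_N)$ (this last condition is possible for large $N$ by Corollary \ref{KR-properties}, since $x\neq x'$). Let $D^N_{i_0,j_0}$ be the atom of $\Xi_N$ containing $x'$; then $j_0\geq 1$, and the fact that $\gamma\in\Gamma_N$ permutes atoms inside each tower while sending $A$ onto $B$ yields $a_i:=|A\cap T_i^N|=|B\cap T_i^N|$ for each tower $T_i^N$. I then set $V:=D^N_{i_0,j_0}$ and let $\Xi^{*}$ be the first-return K-R partition with base $V$, further refined so that the itinerary of each point through the atoms of $\Xi_N$ up to first return is a locally constant function of the base point: this splits $V$ into finitely many clopens $(E_k)$, each spawning a tower $T^{*}_k$ of $\Xi^{*}$ with a uniquely determined itinerary. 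In particular $\langle\Xi_N\rangle\subseteq\langle\Xi^{*}\rangle$, hence $A,B\in\langle\Xi^{*}\rangle$.

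The core of the argument is an itinerary computation. A trajectory starting in $V=D^N_{i_0,j_0}$ must first climb the ``partial top'' $D^N_{i_0,j_0},\ldots,D^N_{i_0,H_{i_0}-1}$ of $T^N_{i_0}$, then run through a sequence of full cycles of various other towers $T^N_i$ ($i\neq i_0$), and finally climb the ``partial bottom'' $D^N_{i_0,0},\ldots,D^N_{i_0,j_0-1}$ before re-entering $V$. No full cycle through $T^N_{i_0}$ is possible in the middle, because such a cycle would traverse $D^N_{i_0,j_0}=V$ and violate the minimality of the first-return time. Consequently the partial top and partial bottom together amount to exactly one full traversal of $T^N_{i_0}$, and for every tower $T^{*}_k$ of $\Xi^{*}$, writing $m^k_i$ for the number of full cycles through $T^N_i$ in the itinerary of $T^{*}_k$, one obtains
\[|A\cap T^{*}_k|=a_{i_0}+\sum_{i\neq i_0}m^k_i\,a_i=|B\cap T^{*}_k|.\]

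Given this equality of atom counts tower-by-tower in $\Xi^{*}$, I define $\gamma'\in\homeo(X)$ by choosing, inside each tower $T^{*}_k$, a permutation of its atoms that sends the $A$-atoms onto the $B$-atoms. Then $\gamma'\in\llbracket\fie\rrbracket$ and $\gamma'(A)=B$ by construction; moreover $\gamma'$ permutes atoms inside each tower of a K-R partition whose base $V$ contains $x'$, and a direct computation shows that $\gamma'(\fie^p(x'))\in Orb^+_\fie(x')$ for every $p\geq 0$, hence $\gamma'\in\Gamma_{x'}$. The whole argument is symmetric in $x$ and $x'$, which closes the proof. The main obstacle I expect is the itinerary analysis in the third paragraph, and particularly the observation that the partial top and partial bottom of $T^N_{i_0}$ combine into exactly one full cycle of that tower.
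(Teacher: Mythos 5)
Your proof is correct and takes essentially the same route as the paper's: both rebuild a K-R partition based near $x'$ inside the atom $D^N_{i_0,j_0}$ of $\Xi_N$ containing $x'$, observe that each new tower decomposes as the partial top of $T^N_{i_0}$, some full cycles of towers of $\Xi_N$, and the partial bottom of $T^N_{i_0}$ (the two partial pieces recombining into one full traversal), and conclude that $A$ and $B$ meet each new tower in the same number of atoms, so a tower-preserving permutation in $\Gamma_{x'}$ sends $A$ to $B$. The only cosmetic difference is that you take the base of the new partition to be the whole atom $D^N_{i_0,j_0}$ (which rules out full cycles of $T^N_{i_0}$ in the middle), whereas the paper uses a smaller base $F^m\subset D^n_{i,j}$ drawn from a prescribed sequence of partitions converging to $x'$; the counting argument is unaffected.
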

\begin{figure}[ht]
    \centering
    \includegraphics[width=\linewidth]{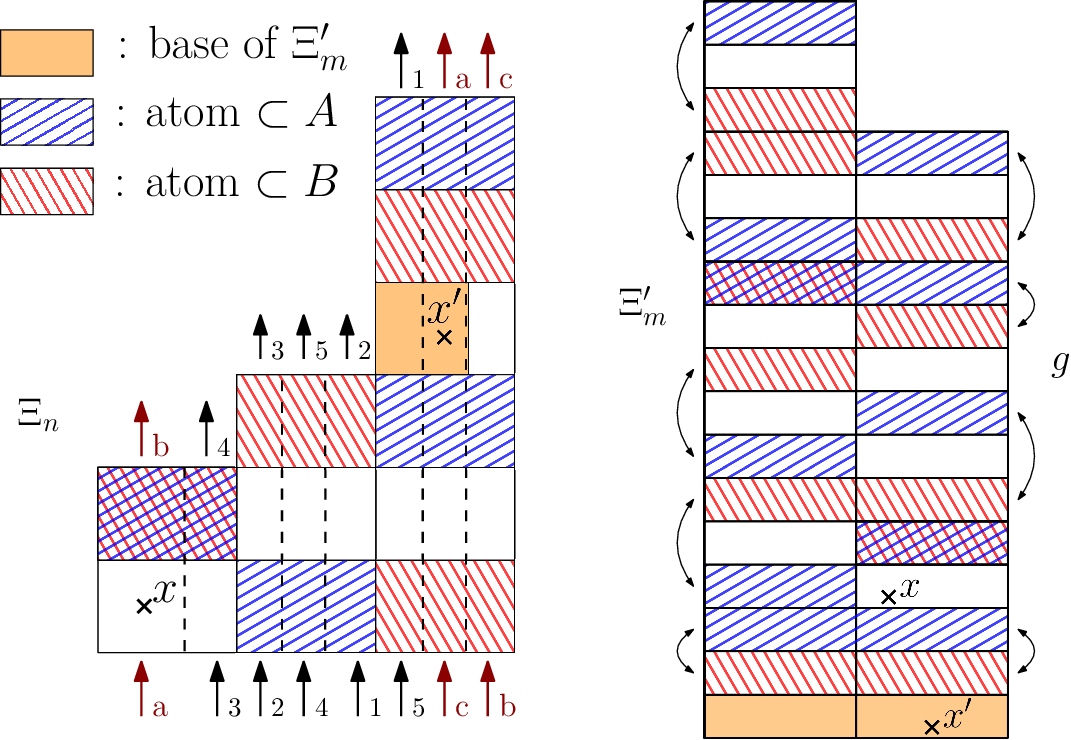}
    \caption{Key figure for understanding of Lemma \ref{lemma GPS SOE-changing base point does not change dimension range}}
    \label{chgtptbase_fig}
\end{figure}

\begin{proof}
Let $(\Xi_n)$ and $(\Xi_n')$ be two sequences of K-R partitions, with bases $(D^n)$ and $(F^n)$ respectively, such that $\bigcap D^n = \{x\}$ and 
$\bigcap F^n = \{x'\}$.
Let also $A$ and $B$ be two clopen sets such that $B\in Orb_{\Gamma_\Xi}(A)$. There exists $n\in\N$ and $\gamma\in\Gamma_{n,\Xi}$ such that
$\gamma(A)=B$, and $A,B$ are in $\langle\Xi_n\rangle$. Then in each tower of $\Xi_n$ there are as many atoms which are contained in $A$ 
as in $B$. Suppose $x'\in D^n_{i,j}$, and choose $m\in\N$ such that $\Xi_m'$ refines $\Xi_n$, and $F^m\subset D^n_{i,j}$.
Let $T'_k$ be a tower of $\Xi_m'$ with base $F^m_{k,0}$. 
Then 
$$H'^{m}_k=H^n_i-j+H^n_{i_1}+\ldots+H^n_{i_t}+j \text{ for some indices } i_1,\ldots,i_t$$ and $T'_k$ is composed of a stacking of 
$\bigcup_{p\geq j}D^n_{i,p}\cap T'_k$, $T^n_{i_1}\cap T'_k$,$\ldots$, $T^n_{i_t}\cap T'_k$, and eventually $\bigcup_{p<j}D^n_{i,p}\cap T'_k$
(see figure \ref{chgtptbase_fig}). So there are as many atoms of $T'_k$ that are included in $A$ as in $B$. 
We can then naturally define an involution $g\in\Gamma_{\Xi_m'}$ such that $g(A)=B$ (see figure \ref{chgtptbase_fig}); whence $B\in Orb_{\Gamma_{\Xi'}}(A)$, which concludes the proof.
\end{proof}

\begin{rmk}\label{rmk: lien entre mes lemmes et la densité de gamma dans le groupe plein topo}
This lemma could also be seen as an easy consequence of the fact that $\overline{\Gamma_\fie}=\overline{\llbracket\fie\rrbracket}$ 
(see \cite{GM} or \cite{IM}, Theorem 5.6). Conversely, Lemmas 
\ref{lemma GPS SOE-changing base point does not change dimension range} and \ref{lemma GPS SOE-be in Gamma orbit can be seen piecewise} give an elementary proof of that fact, using very similar arguments to those in the proof of Theorem \ref{gps1}.
\end{rmk}

\begin{thm}[{\cite{GPS2}, Corollary 4.11}]\label{gps1}
Let $\fie$ and $\psi$ be two minimal homeomorphisms. The following are equivalent:
\begin{enumerate}
 \item $\fie$ is strong orbit equivalent to $\psi$
 \item There exist $x,y \in X$ such that $\Gamma^\fie_{x}$ and $\Gamma^\psi_{y}$
are isomorphic as abstract groups \label{condition 2}
 \item For all $x,y \in X$, $\Gamma^\fie_{x}$ and $\Gamma^\psi_{y}$
are isomorphic as abstract groups. \label{condition 3}
\end{enumerate}
\end{thm}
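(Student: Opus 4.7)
The implication $(3) \Rightarrow (2)$ is immediate (take any particular $x, y$). My plan is to establish $(2) \Rightarrow (3)$ using Krieger's Theorem \ref{Krthm} combined with Lemma \ref{lemma GPS SOE-changing base point does not change dimension range}, and then prove $(1) \Leftrightarrow (2)$ through the characterization that a homeomorphism $g$ is a strong orbit equivalence exactly when it conjugates $\Gamma^\fie_{x_0}$ onto $\Gamma^\psi_{g(x_0)}$ for a suitable point $x_0$.

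For $(2) \Rightarrow (3)$: from an abstract isomorphism $\Gamma^\fie_x \cong \Gamma^\psi_y$, Krieger yields a homeomorphism $g$ realizing an isomorphism of the dimension ranges of these two groups. By Lemma \ref{lemma GPS SOE-changing base point does not change dimension range} the dimension range does not depend on the base point, so the same $g$ realizes an isomorphism of dimension ranges between $\Gamma^\fie_{x'}$ and $\Gamma^\psi_{y'}$ for all $x', y' \in X$; Krieger then upgrades this into an abstract group isomorphism for all such pairs.

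For $(1) \Rightarrow (2)$: let $g$ be a strong orbit equivalence with cocycles $n, m$ singular at $x_0$ and $y_0 := g(x_0)$ respectively. I will show $g \Gamma^\fie_{x_0} g^{-1} = \Gamma^\psi_{y_0}$. Any $\gamma \in \Gamma^\fie_{x_0}$ lies in some $\Gamma_N$ built from a K-R partition $\Xi_N$ with base containing $x_0$, meaning that $\gamma$ never applies $\fie$ along an orbit segment crossing $B(\Xi_N)$. Since $n$ is continuous on $X \setminus \{x_0\} \supseteq X \setminus B(\Xi_N)$, after refining $\Xi_N$ so that $n$ is constant on each atom not contained in the base, the conjugate $g \gamma g^{-1}$ acts by a single power of $\psi$ on the $g$-image of each such atom; hence $g \gamma g^{-1} \in \llbracket \psi \rrbracket$. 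Preservation of $Orb^+_\psi(y_0)$ then follows from $\gamma(Orb^+_\fie(x_0)) = Orb^+_\fie(x_0)$ combined with the identity $g(Orb^+_\fie(x_0)) = Orb^+_\psi(y_0)$, obtained by analyzing the signs of the partial cocycle sums of $n$ along $Orb_\fie(x_0)$. For $(2) \Rightarrow (1)$: Krieger provides $g$ with $g \Gamma^\fie_x g^{-1} = \Gamma^\psi_y$; after composing on the right with a suitable element of $\Gamma^\fie_x$ I may arrange $g(x) = y$. For any $z \neq x$, I pick a K-R partition $\Xi_N$ based at $x$ in which $z$ sits strictly below the top of its tower, so that the local action of $\fie$ around $z$ is implemented by an element $\gamma \in \Gamma^\fie_x$; then $g \gamma g^{-1} \in \Gamma^\psi_y$ has a locally constant cocycle near $g(z)$, which forces the cocycle of $g$ to be continuous at $z$. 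A symmetric argument applied to $g^{-1}$ bounds its discontinuities to $\{y\}$.

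The main obstacle is the direction $(1) \Rightarrow (2)$: matching the cocycle discontinuity of $g$ at $x_0$ with the base of a K-R partition defining $\Gamma^\fie_{x_0}$, and in particular establishing the identity $g(Orb^+_\fie(x_0)) = Orb^+_\psi(y_0)$ despite the jump of $n$ at $x_0$. All other steps reduce to elementary clopen manipulations via the K-R partition structure, as promised in the introduction.
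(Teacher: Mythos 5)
Your handling of the equivalence $(2)\Leftrightarrow(3)$ (Krieger's Theorem \ref{Krthm} plus Lemma \ref{lemma GPS SOE-changing base point does not change dimension range}) and of the direction $(2)\Rightarrow(1)$ follows the paper's route and is essentially correct, up to two small points in the latter: the exceptional point for the cocycle $n$ is the top point $\fie^{-1}(x)$ rather than $x$ (what you need is an element $h\in\Gamma^\fie_x$ agreeing with $\fie$ on a neighbourhood of $z$, and such an $h$ exists exactly when $z$ eventually avoids the tops of the partitions), and the normalization $g(x)=y$ is neither needed for the computation nor clearly achievable by right-composition with elements of $\Gamma^\fie_x$.

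The direction $(1)\Rightarrow(2)$, however, is built on a claim that is false. A strong orbit equivalence $g$ whose cocycle $n$ is singular at $x_0$ need not satisfy $g(Orb^+_\fie(x_0))=Orb^+_\psi(g(x_0))$, and $g\Gamma^\fie_{x_0}g^{-1}$ need not equal $\Gamma^\psi_{y}$ for \emph{any} $y$. Take $\psi=\fie$ and let $V$ be a small clopen neighbourhood of $\fie(x_0)$ with $V$, $\fie^{-2}(V)$ disjoint and not containing $x_0$; define $g=\fie^{-2}$ on $V$, $g=\fie^{2}$ on $\fie^{-2}(V)$ and $g=id$ elsewhere. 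Then $g\in\llbracket\fie\rrbracket$ is a strong orbit equivalence (its cocycles are continuous, so every point qualifies as the singularity), $g(x_0)=x_0$, yet $g(Orb^+_\fie(x_0))$ contains $\fie^{-1}(x_0)$ and not $\fie(x_0)$, hence is not the forward semi-orbit of any point; since $g\Gamma^\fie_{x_0}g^{-1}$ is exactly the setwise stabilizer of $g(Orb^+_\fie(x_0))$ in $\llbracket\fie\rrbracket$, it differs from every $\Gamma^\fie_z$. So the ``main obstacle'' you identify is not a gap to be filled but a statement to be abandoned. What the paper actually proves is strictly weaker, and this is the heart of its argument: after reducing to $g=id$ (replacing $\psi$ by $g^{-1}\psi g$), it shows only that the identity induces a well-defined bijection between the dimension ranges $\bigslant{CO(X)}{\Gamma^\fie_{x}}$ and $\bigslant{CO(X)}{\Gamma^\psi_{x'}}$ (with $x=\fie(y)$ for $y$ the singularity of $n$, and $x'$ chosen analogously from $m$), and then invokes Krieger's theorem, whose back-and-forth produces an abstract isomorphism realized by \emph{some other} homeomorphism, not by $g$. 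Even this weaker statement needs two ingredients absent from your sketch: Lemma \ref{lemma GPS SOE-be in Gamma orbit can be seen piecewise}, which patches local witnesses into a single element of $\Gamma^\psi_{x'}$, and an invariant-measure argument --- cut an atom $A$ on which the cocycle is constant equal to $n_0$ into pieces $A_i$ with $\mu(A_i)<1/(2M+1)$ for $M=\abs{n_0}$, so that some point $x_i$ avoids $\bigcup_{j=-M}^{M}\psi^j(A_i)$ and hence $\psi^{n_0}(A_i)\in Orb_{\Gamma^\psi_{x_i}}(A_i)=Orb_{\Gamma^\psi_{x'}}(A_i)$ by Lemma \ref{lemma GPS SOE-changing base point does not change dimension range}. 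Without something playing the role of that measure-theoretic step, the implication $(1)\Rightarrow(2)$ is not proved.
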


\begin{rmk}\label{rmk: abstract isomorphism is equivalent to isomorphism between dimension ranges}
Thanks to Theorem \ref{Krthm}, for $x,y\in X$ the condition ``$\Gamma^\fie_{x}$ and $\Gamma^\psi_{y}$
are isomorphic as abstract groups'' is equivalent to ``$\Gamma^\fie_{x}$ and $\Gamma^\psi_{y}$ have isomorphic dimension ranges''.
\end{rmk}

\begin{notation}
For simplicity, in the following proof we write $\Lambda_x$ instead of $\Gamma^\psi_x$, and $\Gamma_x$ instead of $\Gamma^\fie_x$ (for $x\in X$).
\end{notation}

\begin{proof} 
First of all, note that conditions \ref{condition 2} and \ref{condition 3} are equivalent by Lemma 
\ref{lemma GPS SOE-changing base point does not change dimension range} and Remark \ref{rmk: abstract isomorphism is equivalent to isomorphism between dimension ranges}.

\medbreak

Let $x$ be in $X$ and suppose that $\Gamma_x$ and $\Lambda_x$ have isomorphic dimension ranges. By continuity of $\fie$, the top point of a sequence of partitions $(\Xi_N)_N\in\N$
associated to $\Gamma_x$ is $\inv{\fie}(x)=y_1$. We also denote $y_2=\inv{\psi}(x)$. By Krieger's theorem (Theorem \ref{Krthm}) and Remark \ref{rmk : more than original Krieger is true}, there exists a homeomorphism $g$ such that $\Lambda_x=g\Gamma_x \inv{g}$, $g(x)=x$ and $g(y_1)=y_2$. This control on these two pairs of points ensures that $g$ is indeed an orbit equivalence.
Let $z\neq y_1$. Then for a large enough $N$, $z$ does not belong to the top of $\Xi_N$, hence there exists $h\in \Gamma_x$ 
and a neighbourhood $W$ of $z$ such that $\restr{h}{W}=\restr{\fie}{W} $. On the other hand $gh\inv{g}\in\Lambda_x\subset \llbracket\psi\rrbracket$,
so there exists a neighbourhood $V$ of $g(z)$ and an integer $n_0$ such that $\restr{gh\inv{g}}{V}=\restr{\psi^{n_0}}{V}$.
Finally we get:
\[\forall z'\in \inv{g}(V)\cap W \ g(\fie(z'))=g(h(z'))=gh\inv{g}(g(z'))=\psi^{n_0}(g(z')),\]
and we deduce that the cocycle $n$ is continuous at the point $z$, hence at every point except $y_1$. The situation being symmetric,
we also get that the other cocycle is continuous everywhere, except maybe in one point, and by definition $g$ is then a strong orbit equivalence between $\fie$ and $\psi$.

\medbreak

Conversely, suppose that $g$ realizes a strong orbit equivalence between $\fie$ and $\psi$. By replacing $\psi$ by $\inv{g}\psi g $, we can assume that
$g=id$. Then 
$$\forall x \in X \ \fie (x)=\psi^{n(x)}(x) \text{ and } \psi (x)=\fie^{m(x)}(x)$$  with $n$ and $m$ being continuous except maybe in $y$ and
$y'$ respectively. We set $x=\fie (y)$ and $x'=\psi (y')$, and construct $\Gamma_{x}$ associated to a sequence of K-R partitions $(\Xi_N)_{N\in\N}$
with bases $(B^N)_{N\in\N}$ and $\Lambda_{x'}$ associated to a sequence of K-R partitions $(\Xi'_N)_{N\in\N}$ with bases $(D^N)_{N\in\N}$. We want to show that 
$\accfonction{\overline{id}}{\bigslant{CO(X)}{\Gamma_{x}}}{\bigslant{CO(X)}{\Lambda_{x'}}}{Orb_{\Gamma_{x}}(A)}{Orb_{\Lambda_{x'}}(A)}$ 
is a bijection. 

By symmetry, if we prove that it is well defined, the assertion follows. We use the following lemma, that we will prove right after the end of the current proof:

\begin{lema}\label{lemma GPS SOE-be in Gamma orbit can be seen piecewise}
Let f be a homeomorphism and $A=\bigsqcup_{i=1}^r A_i$ a clopen partition of $A$. Assume that for all $i$, $f(A_i)\in Orb_{\Lambda_{x'}}(A_i)$. Then $f(A)\in Orb_{\Lambda_{x'}}(A)$.
\end{lema}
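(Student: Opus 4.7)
The plan is to exploit the combinatorial description of the $\Lambda_{x'}$-orbits on clopens afforded by the sequence $(\Xi'_N)_{N\in\N}$. Writing $\Gamma'_N$ for the analogue of the group $\Gamma_N$ attached to the sequence $(\Xi'_N)$, one has $\Lambda_{x'}=\bigcup_N \Gamma'_N$, and any element of $\Gamma'_N$ permutes atoms of $\Xi'_N$ \emph{within} each tower (it cannot cross top or base). Consequently, for any clopen $C\in\langle\Xi'_N\rangle$ and any tower $T'_k$ of $\Xi'_N$, the number of atoms of $T'_k$ contained in $C$ is an invariant of the $\Gamma'_N$-orbit of $C$. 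Conversely, as in the final paragraph of the proof of Lemma~\ref{lemma GPS SOE-changing base point does not change dimension range}, given any matching of atoms tower by tower one obtains an explicit involution in $\Gamma'_N$ realizing it; thus two clopens in $\langle\Xi'_N\rangle$ sharing the same per-tower atom profile are in the same $\Gamma'_N$-orbit.

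With this characterization, I would choose $N$ large enough that each $A_i$ and each $f(A_i)$ lies in $\langle\Xi'_N\rangle$, and that each witness $\lambda_i\in\Lambda_{x'}$ with $\lambda_i(A_i)=f(A_i)$ already lies in $\Gamma'_N$. Both conditions are achievable, since $\Lambda_{x'}=\bigcup_N \Gamma'_N$ and since property \eqref{KR-properties3} of Corollary~\ref{KR-properties} forces every clopen, being a finite union of basis elements, into $\langle\Xi'_N\rangle$ for $N$ large enough. Now fix a tower $T'_k$ and an index $i$: since $\lambda_i\in\Gamma'_N$ sends $A_i$ to $f(A_i)$ and permutes atoms of $T'_k$ among themselves, the number of atoms of $T'_k$ contained in $A_i$ equals the number contained in $f(A_i)$. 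The $A_i$'s are pairwise disjoint by assumption, and so are the $f(A_i)$'s since $f$ is a homeomorphism, so summing these equalities over $i$ gives that $A=\bigsqcup_i A_i$ and $f(A)=\bigsqcup_i f(A_i)$ have the same atom profile in every tower of $\Xi'_N$. The converse half of the characterization then produces $\lambda\in\Gamma'_N\subseteq\Lambda_{x'}$ with $\lambda(A)=f(A)$, which is exactly the conclusion.

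The main obstacle to avoid is the tempting but doomed attempt to define $\lambda$ piecewise as $\lambda_i$ on $A_i$: each $\lambda_i$ is a homeomorphism of all of $X$ that generally acts non-trivially on $X\setminus A_i$, and these off-support actions essentially never fit together into a bijection of $X$. Passing to the level of $\Xi'_N$-atom profiles sidesteps this entirely, because the final element of $\Gamma'_N$ is built from a fresh permutation of atoms in each tower, chosen to transport $A$ onto $f(A)$ globally and making no use of the individual $\lambda_i$'s beyond the equalities of atom counts they encode.
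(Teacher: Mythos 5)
Your proof is correct, but it takes a genuinely different route from the paper's. You reduce everything to the per-tower atom-count invariant: an element of $\Gamma'_N$ permutes the atoms of each tower of $\Xi'_N$ among themselves, so for clopens in $\langle\Xi'_N\rangle$ the number of atoms per tower is a complete invariant of the $\Gamma'_N$-orbit (the converse direction being the same involution construction as in the proof of Lemma \ref{lemma GPS SOE-changing base point does not change dimension range}); you then just add these counts over the disjoint pieces $A_i$ and over the disjoint images $f(A_i)$, and rebuild a fresh witness from scratch. The paper instead really does the ``tempting'' patching you warn against, but makes it work: it defines $h$ atom by atom, equal to $h_i$ on atoms inside $A_i$, to the identity on atoms outside $A\cup f(A)$, and on atoms of $f(A)\setminus A$ it uses a first-return (cycle-chasing) device along backward iterates of $f$ to send them into $A\setminus f(A)$, which is exactly what restores bijectivity. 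What each approach buys: yours is shorter and avoids the somewhat delicate verification that the patched map is a well-defined element of $\Lambda_{N,x'}$ (in particular that the backward iterates $f^{-m}(D)$ are reached by a suitable power of $\psi$ within the same tower), at the cost of relying on the counting characterization, which is specific to these tower groups; the paper's construction is more hands-on and produces a witness that actually agrees with the given $h_i$ on each $A_i$, a stronger conclusion than the lemma requires, though one it never uses. Both arguments are complete and fit the framework the paper has set up.
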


Let $A$ be a clopen set, and $\gamma\in\Gamma_{N,{x}}$, where $N$ is sufficiently large that $A$ is in $\langle\Xi_N\rangle$. 
We have to show that $\gamma(A)\in Orb_{\Lambda_{x'}}(A)$. 
%But we can write $A=\bigsqcup_{i=1}^r A_i$, where the $A_i$'s are atoms of $\Xi_N$,and $\restr{\gamma}{A_i}=\restr{\fie^{k_i}}{A_i}$, so applying Lemma \ref{lemma GPS SOE-be in Gamma orbit can be seen piecewise} we can assume that $\gamma=\fie^p$ for an integer $p$, and $A$ is an atom of $\Xi_N$. 
Thanks to Lemma \ref{lemma GPS SOE-be in Gamma orbit can be seen piecewise}, it is sufficient to show that two atoms of the same tower of $\Xi_N$ are in the same $\Lambda_{x'}$-orbit, or, told differently, that $\fie(A)\in Orb_{\Lambda_{x'}(A)}$ for every atom $A$ of $\Xi_N$ which is not on the top of the partition.
For such an atom $A$, one has $y\notin A$, and so the cocycle $n$ is continuous on $A$ which is compact, hence $A$ can be partitioned into pieces on which $n$ is constant. Thus using Lemma \ref{lemma GPS SOE-be in Gamma orbit can be seen piecewise} once more, we can assume that 
$n$ is constant on $A$, equal to a certain $n_0$. 
We set $M=\abs {n_0}$. Now, for all $x$ in $X$ there exists a clopen neighbourhood $V_x$ of $x$ such that $\psi^{-M}(V_x),\ldots,\psi^{M+1}(V_x)$ are pairwise disjoint (see Remark \ref{rmk: finite images of a small neighbourhood are disjoint}).
Then by compactness of $A$ we get a partition $A=\bigsqcup_{i=1}^{r} A_i$, with each $A_i$ such that $\psi^{-M}(A_i),\ldots,\psi^{M+1}(A_i)$ are pairwise disjoint. Let $i\in\llbracket1,r\rrbracket$ be fixed, and choose $x_i\notin \bigcup_{j=-M}^M\psi^j(A_i)$ (for example $x_i\in \psi^{M+1}(A_i)$), so that none of the $\psi^j(x_i)$ is in $A_i$ for 
$j\in\llbracket-M,M\rrbracket$. That means that no atom contained in $A_i$ belongs to $\psi^j(D^i_k)$ 
(where the $D^i_k$'s are the bases of a sequence of K-R partitions $(\Xi'^i_k)_{k\in\N}$ associated to $\Lambda_{x_i}$) for $k$ large enough and $j\in\llbracket-M,M\rrbracket$, 
i.e every atom contained in $A_i$ is at distance at least $M$ from the top and the bottom of its tower, which means that 
$\fie(A_i)=\psi^{n_0}(A_i)\in Orb_{\Lambda_{x_i}}(A_i)=Orb_{\Lambda_{x'}}(A_i)$ (this last equality being due to Lemma 
\ref{lemma GPS SOE-changing base point does not change dimension range}).
Using Lemma \ref{lemma GPS SOE-be in Gamma orbit can be seen piecewise} one last time, we conclude that $\fie(A)\in Orb_{\Lambda_{x'}}(A)$, which concludes the proof.
\end{proof}

\begin{proof}[Proof of Lemma \ref{lemma GPS SOE-be in Gamma orbit can be seen piecewise}]
Let $N$ be large enough that for all $i$, $A_i$ and $f(A_i)$ are in $\langle\Xi'_N\rangle$ and there exists 
$h_i\in \Lambda_{N,{x'}}$ such that $f(A_i)=h_i(A_i)$. For all $k$, and for all $j\in \llbracket0,H'^N_k-1\rrbracket$, we define 
$h\in\Lambda_{N,{x'}}$ on an atom $D^N_{k,j}$ of $\Xi'_N$ as follows (see figure \ref{reduc_orbites}):
\begin{itemize}
 \item if $D^N_{k,j}\nsubseteq A\cup f(A)$, we set $h=id$ on $D^N_{k,j}$ ;
 \item if $D^N_{k,j}\subset A_i$ for some $i$, we set $h=h_i$ on $D^N_{k,j}$ ;
 \item if $D^N_{k,j}\subset f(A)\setminus A$, we find the least positive integer $m$ and an integer $k_m$ such that
$$f^{-m}(D^N_{k,j})\subset A\setminus f(A) \text{ and }  \psi^{k_m}(D^N_{k,j})=f^{-m}(D^N_{k,j})$$ and we set 
$h=\psi^{k_m}$ on $D^N_{k,j}$.
\end{itemize}

\begin{figure}[h]
    \centering
    \includegraphics[height=5cm]{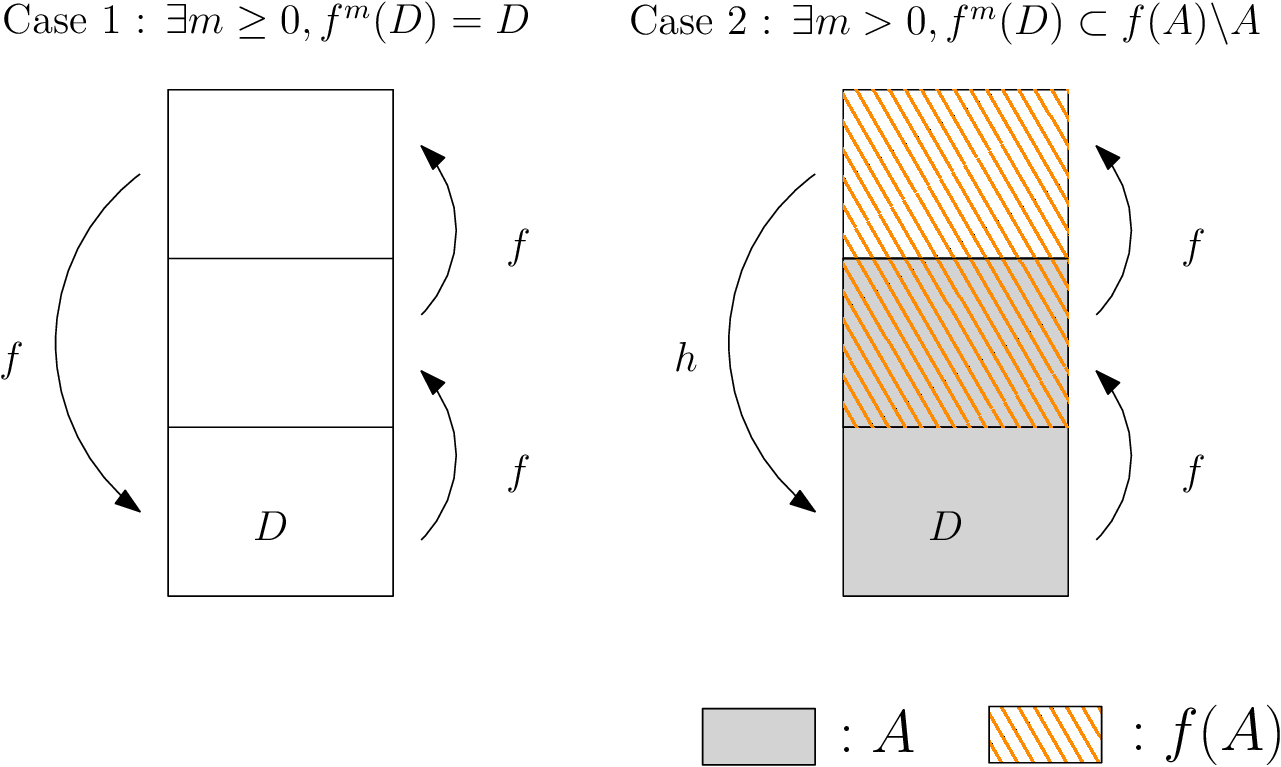}
    \caption{Construction of $h$, example with $m=2$}
    \label{reduc_orbites}
\end{figure}

\noindent It is clear that $h\in\Lambda_{N,{x'}}$ and $h(A)=\bigsqcup_{i=1}^r h_i(A_i)=\bigsqcup_{i=1}^r f(A_i)=f(A)$
\end{proof}

\section{Borel complexity of isomorphism of countable, locally finite simple groups}\label{section: Borel complexity}

In this section we study algebraically in some detail the countable, locally finite groups $\Gamma$ that we have used above. Since for $x,y\in X$, $\Gamma_x$ and $\Gamma_y$
are (spatially) isomorphic by a combination of Lemma \ref{lemma GPS SOE-changing base point does not change dimension range} and Theorem \ref{Krthm},
we denote them $\Gamma$ and do not specify which semi-orbit it preserves. Observing that those groups are not always simple, 
we study their commutator subgoups and show that they are simple; further, we point out that if the commutator subgroups of two of these groups 
are isomorphic, then the groups themselves are isomorphic.
It enables us to show that the strong orbit equivalence relation is, in a way we are going to recall precisely (Borel reducibility theory, 
cf section \ref{subsection: Borel reducibility}), ``simpler'' than the isomorphism relation between countable, locally finite simple groups. 
\medbreak
\subsection{Study of $D(\Gamma)$}\label{subsection: study of D(Gamma)}
We have claimed that $\Gamma^\fie$ is not always simple, so let us give an explicit example. Let $\sigma_3$ denote the 3-odometer on the
Cantor space  $X=\{0,1,2\}^\N$, and $N_\alpha$ be the clopen subset consisting of all sequences starting by the finite sequence $\alpha$. Then we have a sequence
of K-R partitions, each consisting of a single tower of basis $N_{0^n}$ for the $n$-th partition. The cutting and stacking process
is then very simple: the $n+1$-th tower is a stacking of three copies of the $n$-th tower. The element $\gamma$ of $\Gamma^{\sigma_3}$ 
defined by $$\restr{\gamma}{{N_0}}=\restr{{\sigma_3}}{{N_0}} \text{ and } \restr{\gamma}{{N_1}}=\restr{\inv{{\sigma_3}}}{{N_1}}$$
is clearly not in the commutator subgroup, since on $\Xi_n$ it decomposes as a product of $3^{n-1}$ transpositions, which belongs to
$\mathfrak{S}_{3^{n}}\backslash\mathcal{A}_{3^{n}}$.

We however have the following:
\begin{prop}
For every minimal homeomorphism $\fie$, $D(\Gamma^\fie)$ is simple.
\end{prop}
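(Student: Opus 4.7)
The plan is to exploit the direct-limit structure $\Gamma^\fie=\bigcup_n \Gamma_n$, which also yields $D(\Gamma^\fie)=\bigcup_n D(\Gamma_n)$. For $n\geq 5$, property \eqref{KR-properties4} of Corollary \ref{KR-properties} ensures every tower height $H_i^n\geq 5$, and the definition of $\Gamma_n$ as freely permuting the atoms of $\Xi_n$ within each tower identifies $\Gamma_n\cong\prod_{i=1}^{N_n}\mathfrak{S}_{H_i^n}$. Hence $D(\Gamma_n)\cong\prod_i \mathcal{A}_{H_i^n}$ is a finite direct product of nonabelian finite simple groups. The inclusion $D(\Gamma_n)\hookrightarrow D(\Gamma_m)$ for $m>n$ can be read off the cutting-and-stacking construction of $\Xi_m$: an element $\sigma\in\mathcal{A}_{H_i^n}$, acting trivially on each $T_j^n$ with $j\neq i$, acts on each tower $T_k^m$ as the disjoint product, over all complete slices of $T_i^n$ contained in $T_k^m$, of a copy of $\sigma$ (even for any number of slices, so it lands inside $\mathcal{A}_{H_k^m}$).

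Let $N\triangleleft D(\Gamma^\fie)$ be a nontrivial normal subgroup. For $n\geq 5$ the intersection $N\cap D(\Gamma_n)$ is normal in $\prod_i \mathcal{A}_{H_i^n}$, and I would first invoke the standard fact that every normal subgroup of a finite direct product of nonabelian simple groups is the direct product of some of its factors. The quick argument: the $i$-th projection $\pi_i(N\cap D(\Gamma_n))$ is normal in the simple factor $\mathcal{A}_{H_i^n}$, hence trivial or the whole factor; and when $\pi_i$ is surjective, conjugating an element $\nu$ of $N\cap D(\Gamma_n)$ with $\pi_i(\nu)\neq 1$ by elements supported in coordinate $i$ yields commutators $[s,\pi_i(\nu)]$ that, by triviality of the centre of $\mathcal{A}_{H_i^n}$, generate the full factor inside $N\cap D(\Gamma_n)$. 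This gives $N\cap D(\Gamma_n)=\prod_{i\in S_n}\mathcal{A}_{H_i^n}$ for some $S_n\subseteq\{1,\ldots,N_n\}$, and since $N$ is nontrivial one can pick $n_0$ and $i_0\in S_{n_0}$, whence $\mathcal{A}_{H_{i_0}^{n_0}}\subseteq N$.

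The heart of the argument is then to upgrade this to $S_m=\{1,\ldots,N_m\}$ for every $m$ large enough, which gives $N\supseteq D(\Gamma_m)$ and hence $N=D(\Gamma^\fie)$ by taking unions. By minimality of $\fie$ and compactness of $X$, the first-entry time into the nonempty clopen $T_{i_0}^{n_0}$ is bounded by some integer $K$. Every $T_k^m$ is a concatenation of full slices of $\Xi_{n_0}$-towers (since $B(\Xi_m)\subseteq B(\Xi_{n_0})$) and, by property \eqref{KR-properties4}, has height $\geq m$; so for $m\geq K$, every $T_k^m$ must contain at least one complete slice of $T_{i_0}^{n_0}$. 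Fixing any $\sigma\in\mathcal{A}_{H_{i_0}^{n_0}}\setminus\{1\}\subseteq N$ and viewing it inside $D(\Gamma_m)$, it projects onto each factor $\mathcal{A}_{H_k^m}$ as a disjoint product of at least one copy of $\sigma$, hence nontrivially. The product structure of $N\cap D(\Gamma_m)$ then forces $S_m=\{1,\ldots,N_m\}$, which concludes.

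The main obstacle is verifying rigorously the form of the embedding $D(\Gamma_n)\hookrightarrow D(\Gamma_m)$, specifically that each passage of $T_k^m$ through $T_{i_0}^{n_0}$ is a \emph{full} slice of height $H_{i_0}^{n_0}$ on which $\sigma$ acts by a copy of itself. This is a direct consequence of $T_k^m$ being a first-return tower to $B(\Xi_m)\subseteq B(\Xi_{n_0})$, but it is the place where the cutting-and-stacking description recalled in Section \ref{subsection: KR partitions} is used in an essential way and deserves explicit verification.
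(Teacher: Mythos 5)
Your proof is correct, but it takes a genuinely different route from the paper. The paper does not give a self-contained argument: it simply observes that the proof of simplicity of $D(\llbracket\fie\rrbracket)$ in section 3 of \cite{BM} goes through verbatim with $\llbracket\fie\rrbracket$ replaced by $\Gamma^\fie$, the only point requiring comment being that $D(\Gamma^\fie)$ is dense in $\Gamma^\fie$ (Remark \ref{D(gamma) dense}, which is then reused elsewhere). You instead exploit the locally finite structure that is special to $\Gamma^\fie$ and unavailable for $\llbracket\fie\rrbracket$: the identification $\Gamma_n\cong\prod_i\mathfrak{S}_{H^n_i}$, hence $D(\Gamma_n)\cong\prod_i\mathcal{A}_{H^n_i}$, the classification of normal subgroups of finite products of nonabelian simple groups, and a mixing step --- bounded first-entry times into $D^{n_0}_{i_0,0}$ by minimality and compactness, combined with property \eqref{KR-properties4} --- showing that a fixed nontrivial element of $N$ eventually projects nontrivially onto every factor of every $D(\Gamma_m)$, forcing $N\supseteq D(\Gamma_m)$. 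All the steps check out: the block-diagonal form of the embedding $D(\Gamma_{n_0})\hookrightarrow D(\Gamma_m)$, which you rightly single out as the delicate point, does follow from the cutting-and-stacking description (each tower of $\Xi_m$ is a stack of \emph{complete} slices of towers of $\Xi_{n_0}$ because $B(\Xi_m)\subseteq B(\Xi_{n_0})$ and intermediate floors of a K-R partition are disjoint from its base, so returns to $B(\Xi_m)$ can only occur at bases of $\Xi_{n_0}$-towers). What each approach buys: the paper's is short and leverages a known, more general argument about topological full groups; yours is elementary and self-contained, is really the classical ``ample group'' argument in the spirit of Krieger, and yields the finer structural information $D(\Gamma_n)\cong\prod_i\mathcal{A}_{H^n_i}$ along the way --- at the price of not generalizing beyond the locally finite setting.
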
 

To see that, it suffices to read carefully section 3 in {\cite{BM}}, where a similar result is established for commutator subgroups of topological full groups; 
and to observe that the argument given there works exactly the same way, replacing 
$\llbracket\fie\rrbracket$ by $\Gamma^\fie$. 

\begin{figure}[ht]
    \centering
    \includegraphics[height=6cm]{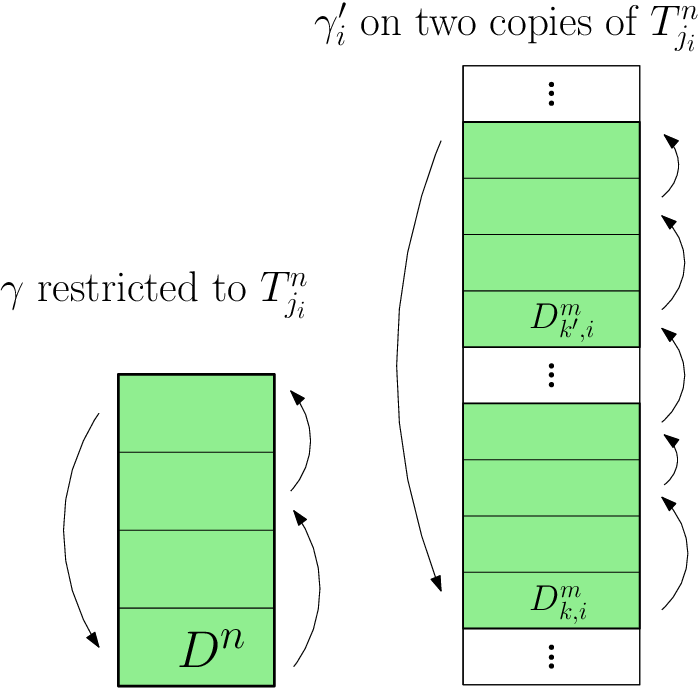}
    \caption{Construction of $\gamma'_i$ in Remark \ref{D(gamma) dense}}
    \label{gamma'i}
\end{figure}

\begin{rmk}\label{D(gamma) dense}
The only remark we maybe need to make, in order to follow the argument of \cite{BM}, is that $D(\Gamma)$ is dense in $\Gamma$. Indeed, let $\gamma\in\Gamma$, and $A_1,\ldots,A_r$ be disjoint clopen sets. There are some integers $n$ and $m$ such that $A_i$ belongs to $\langle\Xi_n\rangle$ for all $i$ and towers of $\Xi_m$ are high enough that they must contain at least two copies of some tower of $\Xi_n$. Let us assume that the tower $T^m_i$ contains two copies of 
$T^n_{j_i}$.
Let $D^m_{i,k}$ and $D^m_{i,k'}$ be distinct atoms of $T^m_i$ such that $D^m_{i,k}\cup D^m_{i,k'}\subset D^n\subset T^n_{j_i}$ (where $D^n$ is an 
atom of $T^n_{j_i}$, no matter which one).
Let $N=\min\{l>0\colon \gamma^l(D^n)=D^n\}$.
We define $\gamma'_i\in \Gamma_m$ by $\gamma'_i(D)=
            \begin{cases}
               D^m_{i,k'}       &\text{if } D=\gamma^{N-1}(D^m_{i,k}) \\
               D^m_{i,k}       &\text{if } D=\gamma^{N-1}(D^m_{i,k'})\\
               \gamma(D)        &\text{elsewhere}
            \end{cases}$.
            
\noindent Figure \ref{gamma'i} shows how it works with $N=3$.
The element $\gamma'_i$ is constructed in such a way that, viewing $\restr{\gamma}{{T^m_i}}$ and $\restr{{\gamma'_i}}{{T^m_i}}$ as permutations 
of atoms of the tower $T^m_i$, exactly one of them belongs to the alternating group. Denote this restriction by $\overset{\sim}{\gamma_i}$. 
Then $\overset{\sim}{\gamma}=\bigsqcup_i \overset{\sim}{\gamma_i}\in D(\Gamma)$ and acts the same way as $\gamma$ on $\langle\Xi_n\rangle$, in particular on every $A_i$.
\end{rmk}
\smallbreak
We now need the following:

\begin{prop}\label{prop: every isom is spatial}
Let $\fie$ and $\psi$ be two minimal homeomorphisms. Then any group isomorphism $\fct{\alpha}{D(\Gamma^\fie)}{D(\Gamma^\psi)}$ is  spatial.
\end{prop}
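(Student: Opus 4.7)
The plan is a reconstruction argument in the spirit of \cite{Med}, adapted from topological full groups to the simple subgroup $D(\Gamma^\fie)\subseteq\llbracket\fie\rrbracket$. From $\alpha$ I would build a Boolean algebra automorphism $\alpha_*\colon CO(X)\to CO(X)$, apply Stone's theorem (Theorem \ref{stone}) to get a homeomorphism $g$ of $X$, and then check that $\alpha(\gamma)=g\gamma\inv{g}$ for every $\gamma\in D(\Gamma^\fie)$.

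The crux is to characterize the clopen support of an element using only the abstract group structure. Every $\gamma\in D(\Gamma^\fie)$ has clopen support $\supp(\gamma)$; I would single out the class of \emph{small} elements, namely the 3-cycles that cyclically permute three atoms inside a single tower of some K-R partition $\Xi_n$. By the alternating-group argument underlying Remark \ref{D(gamma) dense}, such 3-cycles lie in $D(\Gamma^\fie)$, and they admit an intrinsic group-theoretic description: they are the order-3 elements that are minimal with respect to support inclusion (recovered from commutation and joint-centralizer relations). Two small elements commute if and only if their supports are disjoint, and by property \eqref{KR-properties4} every non-empty clopen contains arbitrarily small such supports. Hence the lattice $CO(X)$ is reconstructed as an abstract invariant of $D(\Gamma^\fie)$: clopens appear as Boolean joins of supports of small elements, $\alpha$ transports this structure, and the resulting map $\alpha_*\colon CO(X)\to CO(X)$ is a Boolean algebra automorphism to which Stone's theorem (Theorem \ref{stone}) attaches the homeomorphism $g$.

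To conclude, I would compare $\alpha(\gamma)$ with $g\gamma\inv{g}$ on a small generator $\gamma$: both have the same support $g(\supp(\gamma))$, and on the three atoms of that support they act as 3-cycles, so they either coincide or are inverses; the inverse case is ruled out by testing against an overlapping small 3-cycle. Since $D(\Gamma^\fie)=\bigcup_n D(\Gamma_n)$ is generated by small 3-cycles, the equality $\alpha(\gamma)=g\gamma\inv{g}$ propagates to the whole group. The hard step will be the intrinsic recognition of smallness across the directed union of alternating-type groups that make up $D(\Gamma^\fie)$, that is, showing that an abstract isomorphism cannot merge or split support classes; this requires combining the simplicity of $D(\Gamma^\fie)$ established in the previous proposition with the K-R tower structure, so that abstract conjugacy classes of small elements correspond uniquely to clopen supports.
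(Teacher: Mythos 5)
Your high-level strategy --- recover the Boolean algebra of supports from the abstract group structure, apply Stone's theorem, then verify the conjugation identity on generators --- is the same reconstruction philosophy as the paper's proof, but the step you yourself flag as ``the hard step'' is the entire content of the argument, and your sketch of it rests on claims that do not hold. First, ``two small elements commute if and only if their supports are disjoint'' is false: a $3$-cycle and its inverse (or any two $3$-cycles cyclically permuting the same three atoms) commute and have equal support. Second, there are no ``order-$3$ elements minimal with respect to support inclusion'': by minimality of $\fie$, every nonempty clopen contains atoms of arbitrarily fine K-R partitions, hence supports of $3$-cycles strictly smaller than any given one, so the proposed intrinsic description picks out the empty class. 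The genuine difficulty --- producing a purely group-theoretic criterion, preserved by $\alpha$, for disjointness (or comparison) of supports, from which the support map and hence the algebra of clopens can be reconstructed --- is not addressed; asserting that it follows from ``commutation and joint-centralizer relations'' together with simplicity is not a proof.

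The paper handles exactly this difficulty by invoking Fremlin's reconstruction Theorem 384D of \cite{Fr}: after checking that $D(\Gamma^\fie)$ and $D(\Gamma^\psi)$ have \emph{many involutions} (Remark \ref{manyinvol}), that theorem directly yields an automorphism $\Lambda$ of the algebra $RO(X)$ of regular open sets such that $\alpha(g)(V)=\Lambda g\inv{\Lambda}(V)$ for all $g$ and all $V\in RO(X)$. The remaining work --- which your proposal does not anticipate, since you work with $CO(X)$ from the start --- is to show that $\Lambda$ maps $CO(X)$ onto $CO(X)$; the paper does this by writing each atom of a sufficiently fine K-R partition as $\supp(\gamma_1)\cap\supp(\gamma_2)$ for two double transpositions $\gamma_1,\gamma_2\in D(\Gamma)$ and using that $\supp(\alpha(g))=\Lambda(\supp(g))$ for involutions $g$. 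If you wish to avoid the black box and carry out the reconstruction by hand, you must actually supply the algebraic characterization of support disjointness (this is where the ``many involutions'' hypothesis does its work in \cite{Fr}); as written, your argument has a gap at its central point.
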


The proof is very much inspired by \cite{Med}, though less technical. It uses as its principal ingredient the reconstruction Theorem 384D of 
\cite{Fr}.
In order to use it we need the following definitions:
\begin{deff}
\begin{itemize}
    \item An open set $A$ is called \textit{regular} if $A=int(\overline{A})$. The set of all regular open sets forms a Boolean algebra we denote
    by $RO(X)$. Obviously clopen sets are regular, and so $CO(X)\subset RO(X)$
    \item A group $G\leq\text{Homeo}(X)$ is said to \textit{have many involutions} if for any regular open set $A$, there exists an involution 
    $g\in G$ whose support is included in $A$.
\end{itemize} 
\end{deff}

\begin{rmk}\label{manyinvol}
The "support of $g$" is here defined by $int(\{x\in X\colon g(x)\neq x\})$, but since in our case $g$ always belongs to the topological 
full group it makes no difference with the usual definition.\newline
Note that for every minimal homeomorphism $\fie$, $D(\Gamma^\fie)$ has many involutions. Indeed, if $A$ is a regular open set, there exists a nonempty
clopen set $C\subset A$. Think of $\Gamma^\fie$ as being constructed from a sequence $\Xi$ of K-R partitions whose base point is $x_0\in X$. 
By minimality of $\fie$, for $n$ large enough there are at least 4 atoms in the tower $T^n_{i_0}$ of $\Xi_n$ containing $x_0$ which are 
contained in $C$, say $D^n_{i_0,k_0}, D^n_{i_0,k_1}, D^n_{i_0,k_2}, D^n_{i_0,k_3}$, with $k_j<k_{j+1}$. Then we can easily define an 
involution $g\in D(\Gamma_n)$ whose support is contained in $C$, and thus in $A$ (for example the double transposition 
$(D^n_{i_0,k_0} \ D^n_{i_0,k_1})(D^n_{i_0,k_2} \ D^n_{i_0,k_3})$).
\end{rmk}

\begin{proof}
Let $\fct{\alpha}{D(\Gamma^\fie)}{D(\Gamma^\psi)}$ be an isomorphism.
Thanks to remark \ref{manyinvol}, we now know that $D(\Gamma^\fie)$ and $D(\Gamma^\psi)$ both have many involutions.
Theorem 384D applies and gives us an automorphism of Boolean algebras 
$$\fct{\Lambda}{RO(X)}{RO(X)}$$ such that $\alpha(g)(V)=\Lambda g\inv{\Lambda}(V)$ 
for all $g\in D(\Gamma^\fie)$ and $V\in RO(X)$. If we can show that $\Lambda(CO(X))=CO(X)$, then $\Lambda$ is induced by a homeomorphism of $X$ 
and the proof is over. We proceed to explain why this is true.

We note that $CO(X)$ is generated by the supports of the involutions in $D(\Gamma)$, where $\Gamma$ stands for either 
$\Gamma^\fie$ or $\Gamma^\psi$ (or more generally for any $\Gamma$ associated to a minimal homeomorphism).
Indeed, take a clopen set $C$, and look at $\Gamma$ as constructed out of a sequence of K-R partitions $\Xi$. For $n$ large enough, $C$ belongs to $\langle\Xi_n\rangle$ and every tower of $\Xi_n$ has an height greater than 7. Now, given an atom $A$ of $\Xi_n$, it is easy to construct 
two permutations $\gamma_1,\gamma_2\in D(\Gamma)$ such that $\supp(\gamma_1)\cap \supp(\gamma_2)=A$, indeed it suffices to take two double transpositions
defined on $A$ and three other atoms each, pairwise disjoint, which is possible because of the height of the tower being greater than 7.
The proof is now over by applying the following lemma, that follows from the proof of the Theorem 384D of \cite{Fr}:
\begin{lema}
If $g\in D(\Gamma^\fie)$ is an involution, then $\supp(\alpha(g))=\Lambda(\supp(g))$.
\end{lema}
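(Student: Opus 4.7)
The plan is to pin down $\supp(g)$ purely in terms of the Boolean action of $g$ on $RO(X)$, so that the intertwining relation $\alpha(g)=\Lambda g\inv{\Lambda}$ on $RO(X)$ transports supports automatically.

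First I would record the crucial fact that for every $h\in\llbracket\fie\rrbracket$, the set $\mathrm{Fix}(h):=\{x\in X:h(x)=x\}$ is clopen. Indeed, writing $X=\bigsqcup_{i}A_{i}$ with $\restr{h}{A_{i}}=\restr{\fie^{k_{i}}}{A_{i}}$, minimality of $\fie$ precludes periodic points of $\fie^{k_{i}}$ when $k_{i}\neq 0$, so $\mathrm{Fix}(h)=\bigsqcup_{k_{i}=0}A_{i}$. Consequently $\supp(h)=X\setminus\mathrm{Fix}(h)$ is clopen, and is precisely the Boolean complement of $\mathrm{Fix}(h)$ inside $RO(X)$. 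The same applies to $\alpha(g)\in\llbracket\psi\rrbracket$.

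The key step is the following Boolean definability of $\mathrm{Fix}(h)$: for any $h\in\homeo(X)$ and any $V\in RO(X)$,
\[
V\subseteq\mathrm{Fix}(h)\iff\bigl(\forall W\in RO(X),\ W\subseteq V\Rightarrow h(W)=W\bigr).
\]
The direction $\Rightarrow$ is trivial. For $\Leftarrow$, assuming the right-hand side with $V\not\subseteq\mathrm{Fix}(h)$, I pick $x\in V$ with $h(x)\neq x$, choose disjoint clopen neighbourhoods $U_{x}$ of $x$ and $U_{h(x)}$ of $h(x)$, and set $W:=U_{x}\cap\inv{h}(U_{h(x)})\cap V$; then $W$ is a nonempty clopen in $V$ with $h(W)\cap W=\emptyset$, so $h(W)\neq W$, contradicting the hypothesis. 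It follows that
\[
\mathrm{Fix}(h)=\bigvee\{V\in RO(X):\forall W\leq V,\ h(W)=W\}.
\]
Note that this does not use that $h$ is an involution; the assumption on $g$ in the lemma is relevant only for the enclosing argument in Proposition~\ref{prop: every isom is spatial}.

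The conclusion is then immediate: since $\Lambda$ is a Boolean automorphism of $RO(X)$ preserving arbitrary joins and complements, and since $\alpha(g)\circ\Lambda=\Lambda\circ g$ on $RO(X)$, substituting $V'=\Lambda(V)$ and $W'=\Lambda(W)$ in the characterization above gives
\[
\Lambda(\mathrm{Fix}(g))=\bigvee\{V'\in RO(X):\forall W'\leq V',\ \alpha(g)(W')=W'\}=\mathrm{Fix}(\alpha(g)),
\]
and taking Boolean complements in $RO(X)$ yields $\Lambda(\supp(g))=\supp(\alpha(g))$. The only mild obstacle is setting up the Boolean definability step cleanly; once that is in place, the only ingredient special to our setting is clopenness of $\mathrm{Fix}(h)$ for $h\in\llbracket\fie\rrbracket$, which is precisely where minimality of $\fie$ enters.
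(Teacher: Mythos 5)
Your proof is correct, but it takes a different route from the paper: the paper offers no self-contained argument for this lemma and simply asserts that it ``follows from the proof of Theorem 384D of \cite{Fr}'', i.e.\ it relies on the fact that Fremlin's reconstruction map $\Lambda$ is built precisely out of supports of involutions. You instead give an external verification that works for \emph{any} Boolean automorphism $\Lambda$ of $RO(X)$ intertwining the two actions: you characterize $\mathrm{Fix}(h)$ as the join of all $V\in RO(X)$ such that every regular open $W\leq V$ is $h$-invariant, a condition expressed purely in the language of the Boolean action, and then transport it through $\Lambda$ (which, being a Boolean isomorphism of the complete algebra $RO(X)$, preserves arbitrary joins and complements). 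The two supporting observations you need are both right: minimality forbids periodic points, so $\mathrm{Fix}(h)$ and hence $\supp(h)$ are clopen for $h\in\llbracket\fie\rrbracket$ (making the Boolean complement of $\mathrm{Fix}(h)$ in $RO(X)$ coincide with $\supp(h)$), and the witness $W=U_x\cap\inv{h}(U_{h(x)})\cap V$ is a nonempty regular open subset of $V$ moved off itself. Your argument buys independence from the internal details of \cite{Fr} and, as you note, applies to all elements of $D(\Gamma^\fie)$ rather than only involutions; what the paper's citation buys is brevity, since the statement is immediate for anyone who has the construction of 384D in hand.
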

Indeed, this shows that $\Lambda(CO(X))\subset CO(X)$, and since the situation is symmetric we obtain as desired the equality $\Lambda(CO(X))=CO(X)$.
\end{proof}

Let us sum up what we know: given two minimal homeomorphisms $\fie$ and $\psi$, we have
\[
\begin{aligned}
     &D(\Gamma^\fie), \ D(\Gamma^\psi) \text{ are isomorphic as abstract groups }\\
     &\iff D(\Gamma^\fie), \  D(\Gamma^\psi) \text{ are spatially isomorphic }\\
     &\iff D(\Gamma^\fie), \  D(\Gamma^\psi) \text{ have isomorphic dimension ranges, (Theorem \ref{Krthm})}\\
     &\iff \Gamma^\fie, \  \Gamma^\psi \text{ have isomorphic dimension ranges, (Remark \ref{D(gamma) dense})}\\
     &\iff \fie, \  \psi \text{ are strong orbit equivalent, (Theorem \ref{gps1})}
\end{aligned}
\]

\subsection{Isomorphism on the space of countable, locally finite simple groups is a universal relation}\label{subsection: Borel reducibility}

%Let us start by recalling the basics of Borel reducibility theory mentioned in the introduction.
Let us start by recalling the definition of Borel reducibility from the introduction :
\begin{deff}
Let $E,F$ be equivalence relations on standard Borel spaces $X,Y$ respectively. $E$ is said to be \emph{Borel reducible to $F$}, and we write 
$E\leq F$ if there exists a Borel map $\fct{f}{X}{Y}$ such that $$\forall x,x'\in X \ xEx'\iff f(x)Ff(x').$$ We call such a map $f$ a 
\emph{Borel reduction from $E$ to $F$}. If $E\leq F$ and $F\leq E$, we say that \emph{$E$ and $F$ are Borel bireducible}.
\end{deff}

We also recall that there exists a universal equivalence $E_\infty$ arising from an action of $S_\infty$ on a standard Borel space, that we denote $E_\infty$. A theorem of Melleray gives us a particular realization of $E_\infty$ :

\begin{thm}[{\cite{Mel20}}]
The equivalence relation of strong orbit equivalence of minimal homeomorphisms on the Cantor space is Borel bireducible to $E_\infty$.
\end{thm}

\begin{comment}
The intuition associated to the theory is that, if $E$ Borel reduces to $F$, then $F$ is considered more ``complex'' than $E$ because being able to solve 
the classification problem associated to it implies being able to solve the one associated to $E$. The Borel hypothesis guarantees that the 
correspondence is somehow computable. The following theorem shows that there always exist 
equivalence relations as complex as possible for Borel Polish group actions.
\begin{thm}[{\cite{BecKec}, Corollary 3.5.2}]
Let $G$ be a Polish group. There exists an equivalence relation arising from a Borel $G$-action on a standard Borel space $E_G$ such that
any other such relation Borel reduces to it.
\end{thm}
\end{comment}

We have obtained in the previous subsection (see the series of equivalences at the end of subsection \ref{subsection: study of D(Gamma)}) that strong orbit equivalence between two minimal homeomorphisms $\fie$ and $\psi$ is characterized by the isomorphism of $D(\Gamma^\fie)$ and $D(\Gamma^\psi)$, which are countable, simple and locally finite.
We need to exhibit a Borel way to associate to a minimal homeomorphism $\fie$ its countable, locally finite simple group $D(\Gamma_{x_0}^\fie)$.
There are plenty of ways to do it and the one we have chosen requires a lot of steps; but it is quite easy to check that each of those steps is Borel.
Let $\homeomin(X)$ be the Borel subset of $\homeo(X)$ that consists of minimal homeomorphisms of $X$.

First of all we define 
$$\fct{EnumTfg}{\homeomin(X)}{\homeo(X)^\omega}$$
such that $EnumTfg(\fie)$ is an enumeration of $\llbracket\fie\rrbracket$.

Let $(n^i_1,\ldots,n^i_{k_i},A^i_1,\ldots A^i_{k_i})_{i\in\N}$ be an enumeration of $\bigsqcup_{k\in\N} \N^k\times CO(X)^k$. We define $EnumTfg$ as follows:
for all $i\in\N$, if $\bigsqcup_{j=1}^{k_i} A_j^i=X$ and $\bigsqcup_{j=1}^{k_i} \restr{\fie^{n^i_j}}{A^i_j}\in\llbracket\fie\rrbracket$, then 
$EnumTfg(\fie)(i)=\bigsqcup_{j=1}^{k_i} \restr{\fie^{n^i_j}}{A^i_j}$, else, put it equal to $id$. This defines an enumeration of $\llbracket\fie\rrbracket$
with repetitions. Note that one can always make a sequence injective in a Borel way by removing duplicates.

Now that we have encoded $\llbracket\fie\rrbracket$, we define 
$$\fct{Enum\Gamma_{x_0}}{\homeomin(X)}{\homeo(X)^\omega}$$ such that 
$Enum\Gamma_{x_0}(\fie)$ enumerates $\Gamma^\fie_{x_0}$. 
To do this, we introduce the function 
$$\fct{IsIn\Gamma}{\homeo(X)\times\homeomin(X)}{\homeo(X)}$$ that associates $\alpha$ to $(\alpha, \fie)$ if it belongs
to $\Gamma^\fie_{x_0}$, and $id$ otherwise. The condition ``$\alpha$ belongs to $\Gamma^\fie_{x_0}$'' can be written 
$$\begin{array}[t]{lcrl}
    \alpha\in\llbracket\fie\rrbracket 
    & \text{ and } \forall n\in\N \exists m\in\N \alpha(\fie^n(x_0))=\fie^m(x_0) \\
    &  \text{ and } \forall n\in\N \exists m\in\N \alpha(\fie^m(x_0))=\fie^n(x_0)
\end{array} $$
whence $IsIn\Gamma$ is a Borel function.
Then it is clear that $Enum\Gamma_{x_0}$ defined by $$Enum\Gamma_{x_0}(\fie)_i=IsIn\Gamma((EnumTfg(\fie)_i,\fie))$$ 
is a Borel function.

It remains to encode $D(\Gamma)$. Since one can define Borel functions $Commu$ and $GenBy$ which enumerate respectively all commutators of a given sequence of homeomorphisms
and the group generated by it, we can define 
$$EnumD\Gamma=GenBy\circ Commu\circ Enum\Gamma_{x_0}$$
It is a Borel reduction of the strong orbit equivalence relation to the isomorphism relation on countable, locally finite simple groups, and we have proved the following :

\begin{thm}
The relation of isomorphism of countable, locally finite, simple groups is a universal relation arising from a Borel action of $S_\infty$.
\end{thm}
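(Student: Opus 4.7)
The plan is to combine two ingredients: on the one hand, the series of equivalences established at the end of subsection \ref{subsection: study of D(Gamma)}, which tells us that strong orbit equivalence of $\fie$ and $\psi$ is the same as abstract isomorphism of the simple locally finite groups $D(\Gamma^\fie)$ and $D(\Gamma^\psi)$; on the other hand, Melleray's theorem that strong orbit equivalence is a universal relation arising from a Borel $S_\infty$-action. Once these are combined, the theorem follows provided one can exhibit a Borel reduction from strong orbit equivalence to isomorphism of locally finite simple groups, together with the (well-known) observation that isomorphism of countable groups is itself a relation induced by a Borel action of $S_\infty$ on a standard Borel space of group structures. That last point is standard: countable groups are coded by their multiplication tables in $\N^{\N\times\N}$, the locally finite and simple ones form a Borel subset, and isomorphism corresponds exactly to the coordinate-wise $S_\infty$-action on this space.

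The heart of the argument is therefore the construction of a Borel map $\fie\mapsto D(\Gamma^\fie_{x_0})$ viewed at the level of codes. I would proceed as in the excerpt, step by step. First, enumerate $\llbracket\fie\rrbracket$ in a Borel way using the characterization of topological full group elements as piecewise powers of $\fie$ on a clopen partition: enumerate all finite tuples $(n^i_1,\dots,n^i_{k_i},A^i_1,\dots,A^i_{k_i})$ and, for each, test whether the $A^i_j$ partition $X$ and whether the corresponding piecewise map is a homeomorphism, assigning $id$ if not. Then filter this enumeration by the Borel predicate ``belongs to $\Gamma^\fie_{x_0}$'', expressible in terms of forward images of $x_0,\fie(x_0),\ldots$ as in the excerpt. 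Finally, apply Borel operations that enumerate commutators and then the subgroup they generate to extract a code for $D(\Gamma^\fie_{x_0})$. Checking that each step is Borel is elementary, since it only involves countable quantification over elements of the clopen algebra and over integers.

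The main obstacle is really bookkeeping rather than mathematics: one must verify that each intermediate map lands in a standard Borel space and that each predicate used (minimality of $\fie$, membership in $\llbracket\fie\rrbracket$, in $\Gamma^\fie_{x_0}$, etc.) is indeed Borel. Minimality is $\Pi^0_2$ (for every nonempty clopen $U$, every $x$ eventually visits $U$), membership in $\llbracket\fie\rrbracket$ is witnessed by a finite combinatorial datum and hence is Borel, and the remaining predicates reduce to countable quantification over $\N$ and $CO(X)$. Once this is granted, the map $EnumD\Gamma$ of the excerpt is a Borel reduction from strong orbit equivalence to isomorphism of simple locally finite groups, and composing with Melleray's universality result yields the theorem.

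Putting everything together, one obtains a Borel reduction from every equivalence relation induced by a Borel $S_\infty$-action to the isomorphism relation of simple locally finite groups, while the reverse direction (that this isomorphism relation is itself induced by such an action) is standard; hence this relation is universal among Borel $S_\infty$-actions, as claimed.
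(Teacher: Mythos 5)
Your proposal is correct and follows essentially the same route as the paper: combining the chain of equivalences from subsection \ref{subsection: study of D(Gamma)} with Melleray's universality result, and constructing the Borel reduction $\fie\mapsto D(\Gamma^\fie_{x_0})$ via the same enumeration steps ($EnumTfg$, then filtering by membership in $\Gamma^\fie_{x_0}$, then closing under commutators and generation). Your explicit remark that isomorphism of countable locally finite simple groups is itself induced by the $S_\infty$-action on the space of multiplication tables is a point the paper leaves implicit, but it is the standard logic action and does not change the argument.
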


So in particular, in the case of Borel reducibility it is as complicated to classify simple, locally finite groups as it is to classify countable groups.

\subsection*{Acknowledgements}
The author would like to thank an anonymous
referee for their work and helpful comments, and also Julien Melleray for
suggesting this approach, in particular the link with Borel complexity.

\end{document}